\begin{document}

\newcommand\vh{\vspace{0.2in}}
\newfont{\sans}{cmss10}
\newfont{\sansm}{cmss8}
\newcommand\al{\alpha}
\newcommand\chal{\check\alpha}
\def\ds{\displaystyle}
\def\g{\mathfrak g}
\def\m{\mathfrak m}
\def\a{\mathfrak a}
\def\b{\mathfrak b}
\def\z{\mathfrak z}
\def\<{\langle}
\def\>{\rangle}
\def\wt{\widetilde}
\def\wh{\widehat}
\def\om{\omega}
\newcommand{\dia}[8]{{#1-#3-&#4&-#5-#6-#7-#8\\ &|& \\ &#2&}}
\newcommand{\ovl}{{\overline }}
\newcommand{\C}[1]{{\mathcal #1}}
\newcommand\CA{{\mathcal A}}
\newcommand\CH{{\mathcal H}}
\newcommand\CI{{\mathcal I}}
\newcommand\CK{{\mathcal K}}
\newcommand\CL{{\mathcal L}}
\newcommand\CN{{\mathcal N}}
\newcommand\CO{{\mathcal O}}
\newcommand\CG{{\mathcal G}}
\newcommand\CP{{\mathcal P}}
\newcommand\CR{{\mathcal R}}
\newcommand\CS{{\mathcal S}}
\newcommand\CU{{\mathcal U}}
\newcommand\CV{{\mathcal V}}
\newcommand\CW{{\mathcal W}}
\newcommand\CX{{\mathcal X}}
\newcommand\CY{{\mathcal Y}}
\newcommand\bB{{\mathbb B}}
\newcommand\bC{{\mathbb C}}
\newcommand\bF{{\mathbb F}}
\newcommand\bH{{\mathbb H}}
\newcommand\bK{{\mathbb K}}
\newcommand\bL{{\mathbb L}}
\newcommand\bN{{\mathbb N}}
\newcommand\bR{{\mathbb R}}
\newcommand\bW{{\mathbb W}}
\newcommand\bZ{{\mathbb Z}}
\newcommand\ve{{\ ^\vee ~}}
\newcommand\one{{{1\!\!1}}}
\newcommand\zero{{{0\!\!0}}}
\newcommand\ovr{\overline}
\newcommand\la{{\lambda}}
\newcommand\ep{{\epsilon}}
\newcommand\sig{{\sigma}}
\newcommand\ome{{\omega}}
\newcommand{\fk}[1]{{\mathfrak #1}}
\newcommand\fc{{\mathfrak c}}
\newcommand\fg{{\mathfrak g}}
\newcommand\fh{{\mathfrak h}}
\newcommand\frk{{\mathfrak k}}
\newcommand\fl{{\mathfrak l}}
\newcommand\fm{{\mathfrak m}}
\newcommand\fn{{\mathfrak n}}
\newcommand\ovlfn{\overline{\mathfrak n}}
\newcommand\fo{{\mathfrak o}}
\newcommand\fp{{\mathfrak p}}
\newcommand\fq{{\mathfrak q}}
\newcommand\fs{{\mathfrak s}}
\newcommand\ft{{\mathfrak t}}
\newcommand\fu{{\mathfrak u}}
\newcommand\fz{{\mathfrak z}}
\newcommand\fa{{\mathfrak a}}
\newcommand\fb{{\mathfrak b}}
\newcommand\fC{{\mathfrak c}}
\newcommand\fO{{\mathfrak O}}
\newcommand\gc{{\fg_c}}
\newcommand\kc{{\fk_c}}
\newcommand\lc{{\fl_c}}
\newcommand\qc{{\fq_c}}
\newcommand\sce{{\fs_c}}
\newcommand\tc{{\ft_c}}
\newcommand\uc{{\fu_c}}
\newcommand\ssk{{\mathbb K}}
\newcommand\ssl{{\mathbb L}}
\newcommand\tie{{\tilde e}}
\newcommand\wti{\widetilde }
\newcommand\what{\widehat}
\newcommand\wmuO{{\widehat{\mu_\CO}}}
\newcommand\COm{{\CO_\fm}}
\newcommand\tCOm{{\widetilde{\COm}}}
\newcommand\CSA{Cartan subalgebra ~}
\newcommand\CSG{Cartan subgroup ~}
\newcommand\phih{{\phi^{\fh}_f}}
\newcommand\ie{{\it i.e. ~}}
\newcommand\eg{{\it e.g. ~}}
\newcommand\cf{{\it cf. ~}}
\newtheorem{theorem}{Theorem}[section]
\newtheorem{corollary}[theorem]{Corollary}
\newtheorem{lemma}[theorem]{Lemma}
\newtheorem{proposition}[theorem]{Proposition}
\newtheorem{conjecture}[theorem]{Conjecture}
\newtheorem{definition}[theorem]{Definition}
\newtheorem{remark}[theorem]{Remark}
\newtheorem{example}[theorem]{Example}

\newcommand\Ad{{\operatorname{Ad}}}
\newcommand\Spec{{\operatorname{Spec}}}
\newcommand\ad{{\operatorname{ad}}}
\newcommand\tr{{\operatorname{tr}}}
\newcommand\Hom{{\operatorname{Hom}}}
\newcommand\End{{\operatorname{End}}}
\newcommand\Tr{{\operatorname{Tr}}}
\newcommand\vsp{{\vspace{0.5in}}}

\numberwithin{equation}{section}

\title
[Ramanujan bigraphs]{Ramanujan bigraphs  associated with $SU(3)$ over a $p$-adic field}
\author{Cristina Ballantine and Dan Ciubotaru}
\address{Department of Mathematics and Computer Science, College of
  the Holy Cross, Worcester, MA 01610}
\email{cballant@holycross.edu}

\address{Department of Mathematics, University of Utah, Salt Lake
 City, UT 84112} 

\email{ciubo@math.utah.edu}

\date{\today}

\begin{abstract} We use the representation theory of
   the quasisplit form $G$ of $SU(3)$ over a $p$-adic field to investigate whether certain quotients of the Bruhat--Tits tree associated to 
this form are Ramanujan bigraphs. We show that a quotient of the tree
associated with  $G$ (which is a biregular bigraph) is Ramanujan if and only if $G$ satisfies a Ramanujan type conjecture. This result is analogous to the seminal case of $PGL_2(\mathbb{Q}_p)$ considered in \cite{lps}. As a consequence, the classification of the automorphic spectrum of the unitary group in three variables in \cite{R} implies the existence of certain infinite families of Ramanujan bigraphs. \end{abstract}

\maketitle

\section{Introduction}

This paper is concerned with the question of constructing infinite
families of Ramanujan bigraphs, \emph{i.e.}, biregular bipartite
graphs with certain strong connectivity conditions. Following the
classical case of regular graphs, the condition is that the second
largest eigenvalue of the adjacency matrix is asymptotically as small
as possible. The notion of Ramanujan bigraph is made precise in
Definition \ref{d:ram}, and it is motivated by the result of \cite{fl} (see Proposition \ref{bound}), which is the analogue for bigraphs of the well-known bound of Alon-Boppana for regular graphs. There is also an important notion of a weak Ramanujan bigraph which has to do with the multiplicity of the eigenvalue $0$ in the adjacency matrix of the bigraph. Every Ramanujan bigraph is immediately weakly Ramanujan.

Following the approach of \cite{lps}, where infinite families of regular Ramanujan graphs were contructed using the representation theory of $PGL(2,\mathbb Q_p)$ and deep results from the theory of automorphic forms, we construct families of Ramanujan bigraphs using the quasisplit form $G$ of  the group $SU(3)$ defined over a $p$-adic field $K$. For the group $G$, the Bruhat-Tits tree $\tilde X$ is an infinite biregular bipartite bigraph with valencies $(q^3+1,q+1)$, where $q$ is the order of the residue field of $K$. Let $I$ denote the stabilizer in $G$ of an edge of $\tilde X$. This is an Iwahori subgroup of $G$. Assume that we have a discrete co-compact subgroup $\Gamma$ of $G$ which acts without fixed points. Our first result (Theorem \ref{t:crit}) shows that the quotient tree $X=\tilde X/\Gamma$ is a Ramanujan bigraph if and only if a certain Ramanujan type conjecture is satified. More precisely, we have:

\begin{theorem} The quotient tree $X=\tilde X/\Gamma$ is a Ramanujan bigraph if and only if every nontrivial irreducible unitary representation of $G$ with Iwahori fixed vectors that appears in $L^2(G/\Gamma)$ is tempered.
\end{theorem}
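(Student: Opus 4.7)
The strategy follows the classical translation (as in \cite{lps}) of the combinatorial Ramanujan condition for $X$ into a representation-theoretic condition on the automorphic spectrum $L^2(\Gamma\backslash G)$. Let $K_0, K_1$ denote representatives of the two conjugacy classes of maximal parahoric subgroups, one for each vertex type of $\tilde X$, so that $I = K_0 \cap K_1$. I would begin by identifying the Hilbert space of vertex functions on $X$ with $L^2(\Gamma\backslash G)^{K_0}\oplus L^2(\Gamma\backslash G)^{K_1}$ via the bijections $V_i(X) \leftrightarrow \Gamma \backslash G/K_i$, and realizing the adjacency operator $A$ of $X$ as convolution with appropriate characteristic functions of $(K_0, K_1)$-double cosets of neighbor elements (an element of the bi-parahoric Hecke algebra and its adjoint, sitting inside the off-diagonal blocks of the operator).

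Because $\Gamma$ is cocompact, the right regular representation decomposes discretely as $L^2(\Gamma\backslash G) = \bigoplus_{\pi} m(\pi)\,V_{\pi}$, and $A$ preserves each summand $V_\pi^{K_0}\oplus V_\pi^{K_1}$. The spectrum of $A$ on $L^2(X)$ is therefore the union, over the irreducible unitary $\pi$ with $\pi^I\neq 0$ appearing in $L^2(\Gamma\backslash G)$, of the spectra of $A$ on these finite-dimensional pieces. The trivial representation contributes precisely the trivial eigenvalues $\pm\sqrt{(q^3+1)(q+1)}$, and the task reduces to showing that every other such $\pi$ contributes only eigenvalues in the Ramanujan interval $[-q^{3/2}-q^{1/2},\,q^{3/2}+q^{1/2}]$ of Proposition \ref{bound} if and only if $\pi$ is tempered.

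This is a Hecke-algebra computation. The Iwahori--Hecke algebra $\CH(G, I)$ of the quasisplit form of $SU(3)$ is an affine Hecke algebra of rank one with unequal parameters $(q, q^3)$, whose simple modules (and the tempered ones among them) are classified. For each Iwahori-spherical $\pi$ I would read off $\dim \pi^{K_0}$ and $\dim \pi^{K_1}$ from this classification, write the matrix of $A$ on $\pi^{K_0}\oplus\pi^{K_1}$ in terms of the Satake-type parameter of $\pi$, and check that its singular values remain bounded by $q^{3/2}+q^{1/2}$ precisely when $\pi$ is tempered. One must also check that the only Iwahori-spherical $\pi$ with $\pi^{K_0} = \pi^{K_1} = 0$ is a Steinberg-like constituent, which is automatically tempered, so no ``hidden'' non-tempered contribution escapes the analysis.

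The main obstacle is the last step: running case-by-case through the complementary series and non-tempered Langlands quotients of $G$ with Iwahori-fixed vectors, and verifying that the singular values of $A$ on $\pi^{K_0}\oplus\pi^{K_1}$ cross the Ramanujan threshold exactly at the temperedness threshold on the parameter. Once this representation-by-representation matching is established, the stated equivalence follows at once by combining it with the spectral decomposition above.
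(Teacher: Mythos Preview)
Your vertex-based approach is a legitimate alternative to the paper's edge-based one (the paper works with $\mathbb C[E(X)]\cong L^2(I\backslash G/\Gamma)$ and the zeta function $Z_X(u)^{-1}=\det(1-T_1T_2u)$, matching Hecke characteristic polynomials against the factors of Hashimoto's product formula). But there is a genuine gap in your plan: you have misstated the Ramanujan bigraph condition as the one-sided bound $|\lambda|\le q^{3/2}+q^{1/2}$. Definition~\ref{d:ram} requires $|\lambda^2-q_1-q_2|\le 2\sqrt{q_1q_2}$, which is a \emph{two-sided} bound $q^{3/2}-q^{1/2}\le |\lambda|\le q^{3/2}+q^{1/2}$ on the nontrivial $\lambda_j$, together with the requirement that the eigenvalue $0$ have multiplicity exactly $n_2-n_1$.

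This matters precisely at the point where you want the equivalence with temperedness. The one-dimensional non-tempered unitary module $\mathsf{nt}$ has $T_1=q_1$ and $T_2=-1$, hence $\pi^{K_1}$ is one-dimensional and $\pi^{K_2}=0$; on $\pi^{K_0}\oplus\pi^{K_1}$ the adjacency operator is the zero map, contributing the eigenvalue $0$. That eigenvalue lies comfortably in your stated interval, so your case analysis would wrongly declare $\mathsf{nt}$ compatible with the Ramanujan property. In fact its appearance increases the multiplicity of $0$ beyond $n_2-n_1$, violating Definition~\ref{d:ram}. (The paper sees this through the zeta function: $\mathsf{nt}$ contributes the factor $1+q_1u$, distinct from the $n_2-n_1$ copies of $1+q_2u$ coming from $\mathsf{ds}$, so the weak Ramanujan condition detects it.) A similar issue arises for the second complementary series at $\Im\nu=\pi i/\log q$, where $\lambda^2=q_1+q_2-\sqrt{q_1q_2}(q^{\Re\nu}+q^{-\Re\nu})$ drops below $(\sqrt{q_1}-\sqrt{q_2})^2$ exactly when $\Re\nu\neq 0$. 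Your plan is salvageable, but you must build the lower bound and the multiplicity-of-zero bookkeeping into the computation from the start.
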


To establish this equivalence, we make use of the Iwahori-Hecke algebra $\mathcal H(G,I)$, its representation theory (see \S 3) and its action on $X$ via adjancency operators (see \S 4, following \cite{H}).

In order to use this criterion to show the existence of families of Ramanujan bigraphs, we rely on the results of \cite{R}, where the classification of the automorphic spectrum of $U(3)$ is completed. Let $\mathbb G$ be an inner form of $SU(3)$ defined over a global field such that at a place $v$, the group $\mathbb G_v$ of points over the local field  is isomorphic to $G$. The group $\mathbb G$ is constructed from a central simple algebra $D$ of degree $3$ over a quadratic extension $E$ of the global field, by means of an involution $\alpha$ of the second kind.  We construct an infinite family of discrete co-compact subgroups $\Gamma_{i,v,n}$ of $G$ ($i$ ranges over a finite set, $n$ over an infinite set, see \S\ref{sec:3.3}) and consider the family of quotient trees $X_{i,v,n}=\tilde X/\Gamma_{i,v,n}$. Translating the results of \cite{R} in this setting, one finds the following result.

\begin{theorem} Assume that at the infinite places, $\mathbb G_\infty$ is compact. Then $X_{i,v,n}$ is a Ramanujan bigraph if and only if it is a weakly Ramanujan bigraph. If one assumes that the central simple algebra $D$ (which gives rise to $\mathbb G$) is not split over $E$, then the graphs $X_{i,v,n}$ are Ramanujan bigraphs.
\end{theorem}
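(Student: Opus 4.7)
The plan is to combine Theorem \ref{t:crit} with Rogawski's classification \cite{R} of the discrete automorphic spectrum of $U(3)$. By Theorem \ref{t:crit}, $X_{i,v,n}$ is Ramanujan if and only if every non-trivial irreducible unitary representation of $G$ with Iwahori-fixed vectors appearing in $L^2(G/\Gamma_{i,v,n})$ is tempered. Because $\mathbb G_\infty$ is compact and $\Gamma_{i,v,n}$ is cocompact in $G$, the representations $\pi_v$ that occur there are precisely the local components at $v$ of the discrete automorphic representations $\pi$ of $\mathbb G(\mathbb A)$ that are Iwahori-spherical at $v$. The task thus reduces to identifying which non-tempered Iwahori-spherical unitary $\pi_v$ can so arise.

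First I would enumerate, via the Borel--Casselman equivalence and the $\mathcal H(G,I)$-module analysis of \S\S 3--4, the non-tempered irreducible unitary representations of $G=SU(3,K)$ with Iwahori fixed vectors, and compute the spectrum of the adjacency operators of \S 4 on each Iwahori-fixed line. The expected outcome is a short list: the trivial representation (which contributes the Perron eigenvalue) together with a small family of genuinely non-tempered unitary representations, each of which contributes exactly the eigenvalue $0$ on the Iwahori-fixed vectors. In particular, nothing lands in the ``forbidden zone'' strictly between the tempered spectrum and the Perron eigenvalue.

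Next I would invoke Rogawski's classification: the non-tempered discrete automorphic representations of $U(3)$ consist of one-dimensional residual characters together with the CAP/A-packets coming from endoscopic transfer (from $U(1)\times U(2)$ and from $U(1)^3$). Combined with the local computation above, every non-tempered non-trivial contribution to $L^2(G/\Gamma_{i,v,n})$ lies entirely in the $0$-eigenspace of the adjacency operator. This is exactly the content of the first assertion, since the multiplicity of the eigenvalue $0$ is all that separates ``Ramanujan'' from ``weakly Ramanujan''. For the second assertion, if $D$ is not split over $E$, then Rogawski's restrictions on the occurrence of CAP packets in inner forms rule out the endoscopic non-tempered A-packets from the discrete spectrum of $\mathbb G(\mathbb A)$: the required local transfer fails at the places where $D$ is ramified. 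Only one-dimensional characters then remain on the non-tempered side, and their Iwahori-spherical component at $v$ is the trivial representation, which is excluded by the ``non-trivial'' clause of Theorem \ref{t:crit}. Every remaining Iwahori-spherical $\pi_v$ is therefore tempered, and Theorem \ref{t:crit} yields that $X_{i,v,n}$ is Ramanujan.

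The step I expect to be hardest is the matching between Rogawski's adelic/global A-parameter description and the purely local Hecke-module calculations of \S 4: one must verify that each non-tempered local constituent of an endoscopic A-packet at $v$ coincides with one of the explicit non-tempered Iwahori-spherical representations whose adjacency eigenvalue is $0$. This is a local Langlands / affine Hecke algebra translation that requires consistent normalizations throughout, and it is precisely what makes the direction ``weakly Ramanujan implies Ramanujan'' substantive rather than tautological.
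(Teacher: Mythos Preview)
Your overall architecture is right --- combine Theorem \ref{t:crit} with Rogawski's classification --- and this is exactly what the paper does. But your Step 1 contains a genuine error that, if uncorrected, would make the argument circular.

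You claim that the purely local enumeration of non-tempered unitary Iwahori-spherical representations of $G=SU(3,K)$ will yield only the trivial representation and a ``small family \dots each of which contributes exactly the eigenvalue $0$,'' so that ``nothing lands in the forbidden zone.'' This is false. Proposition \ref{unitary}(3) shows that the local unitary dual with Iwahori-fixed vectors contains two genuine complementary series: the subquotients of $X(\nu)$ for $\nu\in\bR$ with $0<|\nu|<\tfrac{\lambda+\lambda^*}{2}$, and those with $\Im\nu=\tfrac{\pi i}{\log q}$ and $0<|\Re\nu|<\tfrac{\lambda-\lambda^*}{2}$. For these, $|\Tr_\varphi(\theta)|=|q^\nu+q^{-\nu}|>2$, so the corresponding adjacency eigenvalue satisfies $|\lambda_j^2-q_1-q_2|>2\sqrt{q_1q_2}$ and lies squarely in the forbidden zone. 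Locally, the non-tempered unitary dual is \emph{not} concentrated at $\lambda_j=0$.

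The paper's argument is therefore structured differently from what you outline. Rogawski's classification is not used merely to tag which CAP packets show up; it is the \emph{only} thing ruling out the complementary series. The statement extracted from \cite{R} is that the unique non-trivial, non-tempered Iwahori-spherical representation of $\mathbb G_v\cong G$ that can occur as a local component of an automorphic representation of $\mathbb G$ is precisely the one-dimensional $\mathcal H(G,I)$-module $\mathsf{nt}$. Since $p_{\mathsf{nt}}(u)=1+q_1u$ pairs with $p_{\mathsf{ds}}(u)=1+q_2u$ to give the quadratic factor $(1+q_1u)(1+q_2u)=1+(q_1+q_2)u+q_1q_2u^2$ corresponding to $\lambda_j=0$ in (\ref{prod}), the appearance of $\mathsf{nt}$ is exactly the obstruction to weak Ramanujan (end of \S\ref{sec:4.1}). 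This yields the equivalence. For the second assertion, Rogawski's Theorem 14.6.3 says that when $D\neq M_3(E)$ no local component of type $\mathsf{nt}$ occurs at all, so Conjecture \ref{conj} holds and Theorem \ref{t:crit} finishes. Your plan becomes correct once you move the exclusion of the complementary series from the local step to the global (Rogawski) step, and identify $\mathsf{nt}$ as the single relevant non-tempered local constituent.
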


The organization of the paper is as follows. In \S2, we recall the
basic definitions about Ramanujan graphs and bigraphs and the
associated zeta functions. In \S3, we introduce the Iwahori Hecke
algebra of the quasisplit $SU(3)$ and present the classification of
its modules, including the unitary dual. In \S4, we explain how the spectral theory of the Iwahori-Hecke algebra is related to the eigenvalues of the zeta function of the Bruhat-Tits bigraph, and obtain the main results listed above.

\smallskip

\noindent{\bf Acknowledgments.} {The authors thank Jon Rogawski for suggesting the problem and Gordan Savin for helpful conversations about
  the material in \S\ref{sec:3.3}.} D.C. was supported in part by
NSA-AMS 081022.

\section{Preliminaries}

\subsection{}As motivation, we first recall briefly the notion of regular Ramanujan graphs. For any $k$-regular graph $X$, we denote by $V(X)$ its vertex set and by $E(X)$ its edge set. We denote by $\Ad(X)$ the adjacency matrix of $X$ and by $\Spec(X)$ the spectrum of $X$, \emph{i.e.}, the collection of eigenvalues of $\Ad(X)$. Notice that we have $k\in\Spec(X)$. Denote by \begin{equation}\lambda(X)= \max \{|\lambda| \, : \, \lambda \in \mbox{Spec}(X), |\lambda |\neq k\},\end{equation} the second largest  eigenvalue of $\Ad(X)$ in absolute value.  If $A$ is a collection of vertices
of
$G$, the {boundary} of $A$, denoted $\partial
A$, is given by $\partial A = \{ x\in V(X)  \setminus  A :~ \{x,y \} \in E,\ \mbox{for some} \ y \in A \}$.

\begin{definition} A finite regular graph $X$ on $n$ vertices and of degree $k$ is called an $(n,k,c)$-expander if for every  subset $A$ of $V(X)$ with 
$\displaystyle  |A| \leq \frac{n}{2}$ we have $|\partial  A| \geq  c |A|.$ The
constant 
 $c$ is called the {expansion coefficient}.
\end{definition}

The expansion coefficient of a regular graph $X$ depends on $\lambda(X)$. More precisely, it is known (\cite{lps}) that a finite $k$-regular graph $X$ on $n$
vertices is  an $(n,k,c)$-expander with $2c=1-\lambda (X)/ k$.
Thus, good expanders have small $\lambda(X)$. However, asymptotically $\lambda(X)$ cannot
be made arbitrarily small, as known from the classical result of Alon-Boppana, which shows that if $X_{n,k}$ is a $k$-regular graph on $n$ vertices, then
\begin{equation}
{\displaystyle \lim_{n
\rightarrow
\infty }} 
\lambda(X_{n,k})\geq 2 \sqrt{k-1}.
\end{equation}

This bound leads to the following natural definition.

\begin{definition}[\cite{lps}] A $(q+1)$-regular graph $X$  is called a
\emph {Ramanujan graph} if  $\lambda(X)\leq 2\sqrt q.$
\end{definition}

Infinite families of Ramanujan graphs of constant degree have been
constructed using the Bruhat-Tits tree for $PGL(2,\mathbb Q_p)$ and deep results from number theory \cite{lps,m}. We will be interested in the analogous notion and constructions for a Ramanujan bigraph.

\subsection{}
Recall that a {$(k,l)$-biregular bigraph} $X$ is
a bipartite graph in which all vertices of one color have degree $k$ and
all vertices of the other color have degree $l$.
If $X$ is a $(k,l)$-biregular bigraph, the trivial eigenvalues of its
adjacency matrix  are
$\pm \sqrt{kl}$. As before, let
\begin{equation}\lambda (X)=\max \{|\lambda| \, : \, \lambda \in \mbox{Spec}(X), |\lambda |\neq \sqrt{kl}\}
\end{equation} denote the absolute value of
the second largest eigenvalue, in absolute value, of $X$. Then,
analogous to the Alon-Boppana bound we have the following result due to
Feng and Li ~\cite{fl}.

\begin{proposition}[\cite{fl}]\label{bound} If $X_{k,l,n}$ is a $(k,l)$-biregular bigraph on $n$
vertices, then  
$${\displaystyle \liminf_{n \rightarrow \infty}} \ \lambda(X_{k,l,n})
\ge
\sqrt{k-1} +
\sqrt{l-1}.$$
\end{proposition}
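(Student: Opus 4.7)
The plan is to adapt the classical walk-counting proof of Alon--Boppana to the bipartite biregular setting. The universal cover of every $(k,l)$-biregular bigraph is the infinite $(k,l)$-biregular tree $T_{k,l}$, whose adjacency operator on $\ell^2(T_{k,l})$ has spectral radius precisely $\sqrt{k-1}+\sqrt{l-1}$. The strategy is to use a trace formula to show that any finite quotient of $T_{k,l}$ must have its second eigenvalue asymptotically catch up to this spectral radius. One may assume $X_{k,l,n}$ is connected, since additional components only contribute further trivial eigenvalues $\pm\sqrt{kl}$ that strengthen the bound.

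The first step is a walk-counting estimate on the tree itself. Let $p_c(2m)$ denote the number of closed walks of length $2m$ based at a vertex of valency $c\in\{k,l\}$ in $T_{k,l}$. Using the two-variable generating function that tracks the rigid alternation between the two vertex colors (or equivalently by decomposing a closed walk into first-return excursions of each color), one obtains a coupled algebraic system whose dominant singularity yields
\[
\lim_{m\to\infty} p_c(2m)^{1/(2m)} \;=\; \sqrt{k-1}+\sqrt{l-1},\qquad c\in\{k,l\}.
\]
This is the biregular analogue of the Kesten--McKay spectral radius computation, and is essentially equivalent to determining the norm of $A_{T_{k,l}}$ on $\ell^2(T_{k,l})$.

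The second step is the covering comparison. Fix $v\in X_{k,l,n}$ and any lift $\tilde v$ in $T_{k,l}$. Unique path lifting identifies the set of closed walks of length $2m$ at $\tilde v$ with the subset of null-homotopic closed walks at $v$, hence $(A_X^{2m})_{v,v}\geq p_{\deg(v)}(2m)$. Since $X_{k,l,n}$ has $ln/(k+l)$ vertices of valency $k$ and $kn/(k+l)$ vertices of valency $l$, summing over $v$ gives
\[
\mathrm{tr}(A_X^{2m})\;\geq\;\frac{n}{k+l}\bigl(l\,p_k(2m)+k\,p_l(2m)\bigr)\;=:\;n\,q(2m).
\]
On the spectral side, the trivial eigenvalues $\pm\sqrt{kl}$ contribute $2(kl)^m$ to $\mathrm{tr}(A_X^{2m})$, and each of the remaining $n-2$ eigenvalues contributes at most $\lambda(X_{k,l,n})^{2m}$, so
\[
(n-2)\,\lambda(X_{k,l,n})^{2m}\;\geq\;n\,q(2m)-2(kl)^m.
\]
Fixing $m$ and letting $n\to\infty$ gives $\liminf_n\lambda(X_{k,l,n})^{2m}\geq q(2m)$. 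Taking $2m$-th roots and then letting $m\to\infty$, Step~1 delivers the required bound $\liminf_n\lambda(X_{k,l,n})\geq\sqrt{k-1}+\sqrt{l-1}$.

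The genuine obstacle is Step~1. In the $k$-regular case this is the familiar Catalan-number combinatorics of excursions on a regular tree; in the biregular case the closed walks must alternate rigidly between the two vertex colors, so the generating functions for first-return excursions form a coupled algebraic system rather than a single functional equation. Solving this system, extracting the critical singularity, and verifying that $p_c(2m)$ has only sub-exponential correction factors around $(\sqrt{k-1}+\sqrt{l-1})^{2m}$ is the real computational heart of the argument, and is precisely what Feng--Li execute in \cite{fl}.
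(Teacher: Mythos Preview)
The paper does not give its own proof of this proposition; it is simply quoted from Feng--Li \cite{fl}. Your sketch is the standard biregular adaptation of the Alon--Boppana walk-counting argument, and it is correct in outline: lower-bound diagonal entries of $A_X^{2m}$ by closed-walk counts on the universal cover $T_{k,l}$, sum to a trace inequality, compare with the spectral expansion, send $n\to\infty$ for fixed $m$, then $m\to\infty$ using the growth rate $p_c(2m)^{1/(2m)}\to\sqrt{k-1}+\sqrt{l-1}$. This is indeed the route taken in \cite{fl}, so there is nothing to contrast.

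One genuine slip: your reduction to the connected case is wrong as stated. Additional components do contribute extra copies of $\pm\sqrt{kl}$, but these are still excluded from $\lambda(X)$ by its definition, so they do not ``strengthen the bound''. Worse, the proposition is actually false without connectedness: a disjoint union of $n/(k+l)$ copies of $K_{k,l}$ is a $(k,l)$-biregular bigraph on $n$ vertices with $\lambda=0$ for every $n$. In your trace argument the trivial contribution becomes $2c\,(kl)^m$ where $c$ is the number of components, and if $c$ grows linearly with $n$ the inequality $(n-2c)\lambda^{2m}\ge n\,q(2m)-2c\,(kl)^m$ yields nothing. Connectedness should therefore be taken as a standing hypothesis (as it is throughout this paper, where every bigraph is a tree quotient), not argued away.
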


Assume now that we have a finite, connected, biregular, bipartite graph $X$ with valencies $q_1+1$ and $q_2+1$, and assume that $q_1\ge q_2$. Let $n_i$ denote the number of vertices of valency $q_i+1$, $i=1,2$, and set  $V(X)=V_1(X) \sqcup V_2(X)$, where $V_i(X)$ consists of the
vertices of $X$ with valency $q_i+1$. Then necessarily we have $n_2\ge n_1$.  The adjacency matrix of the graph $X$ has the eigenvalues 
\begin{equation}\label{spec}
\begin{aligned}
\Spec(X) = \{
\pm \lambda_1, \pm \lambda_2, \ldots \pm \lambda_{n_1}, \underbrace{0, \ldots , 0}_{n_2-n_1}\},\\
\lambda_1=\sqrt{(1+q_1)(1+q_2)}>\lambda_2 \geq \cdots \geq \lambda_{n_1}\geq
0.
\end{aligned}
\end{equation}

Using the bound in Proposition \ref{bound}, Hashimoto \cite{H} (also Sol\'e \cite{S}) defines Ramanujan bigraphs as follows.

\begin{definition}\label{d:ram}A finite, connected, biregular, bipartite
graph with valencies $q_1+1$ and $q_2+1$ is called \emph{Ramanujan bigraph} if
\begin{equation}\label{ramanujan}
|\lambda^2-q_1-q_2| \leq 2 \sqrt{q_1q_2},
\end{equation} for every $\lambda \in \{\pm \lambda_2,\pm\lambda_3,\dots,\pm \lambda_{n_1}\}$, where the notation is as in (\ref{spec}).
In particular, this means $\lambda_{n_1}>0$.

 We call $X$ a \emph{weak Ramanujan bigraph} if $\lambda_{n_1}>0$, in other words if $\Ad(X)$ has the eigenvalue $0$ with multiplicity exactly $n_2-n_1.$ 
\end{definition}

Next, we give an equivalent characterization for Ramanujan bigraphs.

\subsection{} A cycle in a graph $X$ is an equivalence class of a closed  path, where the equivalence is given by shifting the origin. Multiplication of paths is defined by concatenation. A cycle is primitive if it is not a power of a shorter cycle.  If $X$ is a graph with fundamental group $\Gamma$, and $\rho: \Gamma \rightarrow U(n)$ is an $n$-dimensional unitary
representation of $\Gamma$ we define, following \cite[\S 7]{H-H}, the Zeta function of
the graph $X$ attached to $\rho$.

\begin{definition}\label{zeta} The Zeta function of
the graph $X$ attached to a representation  $\rho$ of its fundamental group is given by \begin{equation}Z_{X}(u;\rho)=\prod_{\stackrel{{\displaystyle
P} \ \mbox{\small primitive}}{\mbox{\small cycle in } X}} \det(I_n-\rho(\langle P
\rangle)u^{|P|/2})^{-1},
\end{equation}
 where $|P|$ is the length of the geodesic cycle $P$ in
$x$ and $\langle P \rangle$ is the corresponding conjugacy class in $\Gamma$.  We
denote the Zeta function of $X$ attached to the trivial representation of $\Gamma$ by $Z_X(u)$. \end{definition}

\smallskip

There is an explicit description in \cite{H} of $Z_X(u)$ for a $(q_1+1, q_2+1)$--biregular graph $X$.  

\begin{theorem}[Main Theorem (III),\cite{H}] We have
\begin{equation}\label{prod} 
Z_X(u)^{-1}=(1-u)^{r-1}(1+q_2u)^{n_2-n_1} \prod_{j=1}^{n_1}\left(1-(\lambda_j^2
-q_1-q_2)u+q_1q_2u^2\right),\end{equation}
 where $r=$ rank of $\Gamma= |E(X)|-|V(X)|+1$.
\end{theorem}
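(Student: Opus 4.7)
The plan is to follow the Ihara--Bass--Hashimoto strategy in three stages: first express $Z_X(u)^{-1}$ as a determinant on directed edges, then reduce it to a determinant on vertices via a Bass-type factorization, and finally exploit the bipartite block structure of the adjacency matrix to extract the spectrum $\{\lambda_j\}$.

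\textbf{Step 1 (edge formulation).} I would introduce the non-backtracking matrix $W$ on the set $E^d$ of directed edges of $X$, with $W_{e,e'}=1$ iff the terminus of $e$ equals the origin of $e'$ and $e' \neq \bar e$, and $0$ otherwise. A standard counting argument identifies $\tr(W^n)$ with the number of closed non-backtracking tailless walks of length $n$; grouping these walks by the primitive cycle they power gives
\[
\log \det(I - Wt)^{-1} = \sum_{n\ge 1} \frac{t^n}{n}\tr(W^n) = \sum_{k,P}\frac{t^{k|P|}}{k} = \log Z_X(t^2),
\]
because a cycle of length $|P|$ contributes $u^{|P|/2} = t^{|P|}$ to the Euler product of Definition \ref{zeta}. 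Setting $u = t^2$, this yields $Z_X(u)^{-1} = \det(I - W\sqrt{u})$, which is a polynomial in $u$ because the bipartite structure of $X$ makes $W$ block off-diagonal under the splitting of $E^d$ by the color of the origin.

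\textbf{Step 2 (Bass-type identity).} This is the main content of the argument. Let $S,T\colon \bC^{E^d} \to \bC^{V(X)}$ be the source and target incidence operators, let $J$ be the edge-reversal involution on $\bC^{E^d}$, and let $Q = \mathrm{diag}(q_1 I_{n_1}, q_2 I_{n_2})$ be the diagonal matrix with $Q_{vv} = q_i$ for $v$ of valency $q_i+1$. A direct computation yields the incidence relations $S S^t = I + Q$, $T S^t = A$, together with an expression of $W$ in terms of $S, T, J$. Assembling these into a $2\times 2$ block factorization of $(I-W\sqrt{u})$ (the standard Bass manipulation, adapted to the biregular case as in \cite{H}) and taking determinants yields
\[
Z_X(u)^{-1} = (1-u)^{r-1}\, \det\bigl(I_{V(X)} - A\sqrt{u} + Q u\bigr),
\]
with $r - 1 = |E(X)| - |V(X)|$. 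This step is the main obstacle: the computation is elementary, but one must carefully track that in the biregular case each block of $Q$ contributes the correct diagonal weight and that the Euler factor $(1-u)^{r-1}$ emerges with the right exponent.

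\textbf{Step 3 (bipartite evaluation).} Writing $A = \left(\begin{smallmatrix} 0 & B \\ B^t & 0\end{smallmatrix}\right)$, where $B$ is the $n_1 \times n_2$ biadjacency matrix, the determinant of Step 2 becomes
\[
\det\begin{pmatrix} (1+q_1 u) I_{n_1} & -\sqrt{u}\, B \\ -\sqrt{u}\, B^t & (1+q_2 u) I_{n_2}\end{pmatrix},
\]
and a Schur complement on the $(2,2)$-block (whose scalar factor contributes $(1+q_2 u)^{n_2-n_1}$) reduces this to $(1+q_2 u)^{n_2-n_1}\,\det\bigl((1+q_1 u)(1+q_2 u) I_{n_1} - u\, BB^t\bigr)$. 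By (\ref{spec}) the eigenvalues of $BB^t$ are $\lambda_1^2,\dots,\lambda_{n_1}^2$, so the remaining determinant equals
\[
\prod_{j=1}^{n_1}\bigl((1+q_1 u)(1+q_2 u) - \lambda_j^2 u\bigr) = \prod_{j=1}^{n_1}\bigl(1 - (\lambda_j^2 - q_1 - q_2) u + q_1 q_2 u^2\bigr),
\]
which combined with the factor $(1-u)^{r-1}$ from Step 2 gives the claimed formula. Note that the $\sqrt{u}$ terms automatically recombine into integer powers of $u$ in the Schur complement, consistently with the polynomiality observed at the end of Step 1.
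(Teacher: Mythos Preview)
The paper does not prove this theorem; it is quoted from Hashimoto \cite{H} and used as input. Your argument is correct and complete as a self-contained proof of the formula, so there is no gap to point out.

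It is worth noting, though, that your route is not quite Hashimoto's. You work on the space $\bC^{E^d}$ of \emph{directed} edges, pass through the non-backtracking operator $W$, and invoke the Bass factorization $\det(I-Wt)=(1-t^2)^{|E|-|V|}\det(I-At+Qt^2)$ before specializing to the bipartite block structure. Hashimoto, as reflected in \S\ref{sec:4.1} of the present paper, works instead on the space $\bC[E(X)]$ of \emph{undirected} edges, defines the colored adjacency operators $T_1,T_2$ satisfying the Hecke relations $T_i^2=(q_i-1)T_i+q_i$, and first proves (his Main Theorem (I)) the identity $Z_X(u)^{-1}=\det(I-T_1T_2 u)$; the spectral factorization then comes from diagonalizing $T_1T_2$. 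The two routes are essentially equivalent in content---your $W$ on directed edges and Hashimoto's $T_1T_2$ on undirected edges have matching spectra up to the expected $(1-u)$ factors---but Hashimoto's formulation has the advantage, for the purposes of this paper, of making the isomorphism $\bC[T_1,T_2]\cong\CH(G,I)$ with the Iwahori--Hecke algebra explicit, which is exactly what is needed for the representation-theoretic analysis in \S4.
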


The {trivial zeros} of $Z_X(u)^{-1}$ are  $\displaystyle1, (q_1q_2)^{-1}, -{q_2}^{-1}$. 

\begin{definition} The Zeta function of a finite, connected, biregular,
bipartite graph $X$ with valencies $q_1+1$ and $q_2+1$ is said to satisfy the
\emph{Riemann Hypothesis} if the  non-trivial zeros   of $Z_X(u)^{-1}$ satisfy the following property: 
 \begin{equation}\label{riemann}
\text{if $\Re(s)\in (0,1)$ and $Z_X((q_1q_2)^{-s})^{-1}=0$,  then
$\Re(s)=1/2$.}\end{equation}\end{definition}

The following result shows that the notion of Ramanujan bigraph and Riemann Hypothesis are equivalent. The proof is elementary.

\begin{lemma} A finite, connected, biregular, bipartite graph is
Ramanujan  if and only if its Zeta function satisfies the Riemann Hypothesis.
\end{lemma}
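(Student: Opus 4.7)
The plan is to translate both sides of the claimed equivalence into a single statement about the discriminants of the quadratic factors of $Z_X(u)^{-1}$, and then exploit the fact that the product of the two roots of each such quadratic is fixed by Vieta's formulas.

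By the explicit product formula for $Z_X(u)^{-1}$, the non-trivial zeros are precisely the roots of the polynomials
\begin{equation*}
Q_j(u)=1-(\lambda_j^2-q_1-q_2)u+q_1q_2 u^2,\qquad j=2,\dots,n_1.
\end{equation*}
If $u_1^{(j)},u_2^{(j)}$ denote the two roots of $Q_j$, then Vieta gives $u_1^{(j)}u_2^{(j)}=1/(q_1q_2)$. Under the substitution $u=(q_1q_2)^{-s}$, a root $u$ satisfies $\Re(s)=1/2$ if and only if $|u|=(q_1q_2)^{-1/2}$. Thus the Riemann Hypothesis is equivalent to the assertion that, for every $j\geq2$ with a non-trivial root $u$ lying in the strip $\Re(s)\in(0,1)$, one has $|u|^2=1/(q_1q_2)$.

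The heart of the argument is the following dichotomy, which I would prove directly. If the discriminant $(\lambda_j^2-q_1-q_2)^2-4q_1q_2$ is non-positive, then the roots are complex conjugates (or a real double root), so $|u_1|=|u_2|$; combined with $|u_1||u_2|=1/(q_1q_2)$, both moduli equal $(q_1q_2)^{-1/2}$. The sign condition of the discriminant is exactly $|\lambda_j^2-q_1-q_2|\leq 2\sqrt{q_1q_2}$, which is the Ramanujan inequality. Conversely, if the discriminant is strictly positive, the two roots are real, distinct, and have the same sign (their product is positive), so they have distinct absolute values; one of them then satisfies $|u|<(q_1q_2)^{-1/2}$ and the other $|u|>(q_1q_2)^{-1/2}$, placing their real parts off the critical line.

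The main obstacle, and the only step requiring genuine verification beyond bookkeeping, is showing that in the non-Ramanujan regime the offending real roots actually lie in the critical strip $\Re(s)\in(0,1)$, so that they truly violate RH rather than slipping out of the relevant region. For this I would evaluate $Q_j$ at the four candidate boundary values $\pm1$ and $\pm1/(q_1q_2)$: using $\lambda_j<\lambda_1=\sqrt{(1+q_1)(1+q_2)}$ for $j\geq 2$ (strict since $X$ is connected and bipartite, so $\lambda_1$ is simple), one checks
\begin{equation*}
Q_j(1)=(1+q_1)(1+q_2)-\lambda_j^2>0,\qquad Q_j(1/(q_1q_2))=\tfrac{(1+q_1)(1+q_2)-\lambda_j^2}{q_1q_2}>0,
\end{equation*}
and similarly $Q_j(-1)=\lambda_j^2+(1-q_1)(1-q_2)>0$ and $Q_j(-1/(q_1q_2))=(\lambda_j^2+(q_1-1)(q_2-1))/(q_1q_2)>0$ for $q_1,q_2\geq 2$. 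Since $Q_j$ opens upward and is positive at each of these four points, any real root must lie in $(1/(q_1q_2),1)$ or in $(-1,-1/(q_1q_2))$, i.e.\ in the critical strip. Together with the dichotomy above, this yields the stated equivalence; the case $\lambda_j=0$ (forbidden by the definition of Ramanujan bigraph and already incorporated via the weak Ramanujan condition) is handled by the same computation, since then $Q_j(u)=(1+q_1u)(1+q_2u)$ has a root $-1/q_1$ off the critical line whenever $q_1\neq q_2$.
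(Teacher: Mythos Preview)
Your argument is correct and follows essentially the same route as the paper: both proofs reduce the equivalence to the sign of the discriminant of $Q_j(u)=1-(\lambda_j^2-q_1-q_2)u+q_1q_2u^2$, use Vieta to see that complex conjugate roots must have modulus $(q_1q_2)^{-1/2}$, and then verify in the non-Ramanujan case that the two real roots still lie in the critical strip. The only difference is cosmetic: the paper parametrizes the roots as $(q_1q_2)^{-s}$ and bounds the sum $(q_1q_2)^{1-s}+(q_1q_2)^{s}$ against $1+q_1q_2$ using $\lambda_j^2<(1+q_1)(1+q_2)$, whereas you evaluate $Q_j$ directly at the four boundary points $\pm1,\pm(q_1q_2)^{-1}$---the same inequality drives both computations. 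Your explicit treatment of the degenerate case $\lambda_j=0$ (where $Q_j$ factors as $(1+q_1u)(1+q_2u)$ and $-1/q_1$ is a non-trivial zero off the critical line) is a welcome clarification that the paper leaves implicit in its remark that Ramanujan forces $\lambda_{n_1}>0$.
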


\begin{proof} Let  $X$ be a $(q_1+1,q_2+1)$-biregular bipartite graph. If $X$ is Ramanujan, then
$(\lambda^2 -q_1-q_2)^2-4q_1q_2 \leq 0,$ for $\lambda \in \mbox{Spec}(X)$ with
$\lambda^2 \neq (1+q_1)(1+q_2)$. Condition (\ref{ramanujan}) implies  that each
nonlinear, nontrivial term of $Z^{-1}_X(u)$ has complex conjugate roots of modulus
$(q_1q_2)^{-1/2}$. Thus, since
$|(q_1q_2)^{-s}|=(q_1q_2)^{-\Re(s)}$, $X$  Ramanujan implies that the complex
solutions of $Z_X(u)$ are of the form $(q_1q_2)^{-s}$ with
$\Re(s)=\displaystyle{\frac{1}{2}}$. The real solutions of $Z_X(u)$ are $1$,
$-q_2^{-1}$, $(q_1q_2)^{-1}$  (the trivial solutions), and $\pm
(q_1q_2)^{-1/2}$ (if any eigenvalue satisfies
the equality in (\ref{ramanujan})). Thus, $Z_X$ satisfies (\ref{riemann}).

Conversely, assume that $Z_X$ satisfies (\ref{riemann}), but
suppose that $X$ is
not Ramanujan. Then, there exists $\lambda \in \mbox{Spec}(X)$ such that $\lambda^2 <
(q_1+1)(q_2+1)$ and  $|\lambda^2-q_1-q_2| > 2 \sqrt{q_1q_2}.$ This implies
that the term $1-(\lambda^2-q_1-q_2)u+q_1q_2u^2$ in $Z_X(u)^{-1}$ has two real
(nonequal) roots $u_{1,2}$. There are two cases.  

The first case is $\lambda^2-q_1-q_2 > 2 \sqrt{q_1q_2}$.
Then $u_{1,2}$ are positive. We write the roots in the form $(q_1q_2)^{-s}$, where
$s$ is real. Then we obtain
$\lambda^2-q_1-q_2=(q_1q_2)^{1-s}+(q_1q_2)^{s}$. The right hand side, as a function
of $s$, has a minimum of $2\sqrt{q_1q_2}$ at $s=1/2$. Thus,
if $s\geq 1$, respectively $s\leq 0$, the right hand side is increasing, respectively decreasing  and we have $\lambda^2-q_1-q_2=(q_1q_2)^{1-s}+(q_1q_2)^{s}
\geq 1+q_1q_2$. This contradicts our assumption that $\lambda^2 < (q_1+1)(q_2+1)$.
Thus, $s \in (0,1)$. Since the roots are positive, 
(\ref{riemann}) implies that $s=\Re(s)=1/2$ and we have $(\lambda^2-q_1-q_2)^2=4q_1q_2$. This however contradicts our assumption that
$|\lambda^2-q_1-q_2| > 2 \sqrt{q_1q_2}$.

The second case, $\lambda^2-q_1-q_2 < -2 \sqrt{q_1q_2}$, is analogous.
\end{proof}

\section{The Iwahori-Hecke algebra of $SU(3)$}

\subsection{}\label{sec:su3}
Let $K$ be a local field  with discrete valuation $\omega$. Let $q$ be
the cardinality of the residue field. Let $L/K$ be an unramified\footnote{If $L/K$ is ramified, the building of $SU(3)$ is a 
$(q+1)$-regular tree and this case falls into the framework of \cite{lps}.}
separable quadratic extension.
Let $\varpi$ be the uniformizer of $K$ and $\varpi_1$ the uniformizer of $L$. Let
$\Xi=\omega(K^{\times})$ ($\subset \mathbb{R}$) be the value group of $K$. Since
$L/K$ is unramified, $\Xi$ is also the value group of $L$. Consider
the hermitian form $\Phi= \left(
\begin{array}{ccc}0 & 0 & 1\\0 & -1 & 0 \\ 1 & 0 & 0 \end{array}\right)$.
Set
\begin{equation}
G=SU(3) =\{ g \in SL_3(L)\, | \, g \, \Phi \, {\bar{g}}^T=\Phi \}.
\end{equation}
Let $S$ be a maximal torus in $G$ defined by 
\begin{equation}
S(K)=\left\{\left.\left(
\begin{array}{ccc}d & 0 & 0\\0 & 1 & 0 \\ 0 & 0 & d^{-1} \end{array}\right)\, \right|\, d
\in K^{\times} \right\}.
\end{equation}
The root datum of $(G,S)$ is $(X, X^\vee,\Delta, \Delta^\vee)$. 
Here $X=X^*(S)=\Hom(S, K^\times)$, $X^{\vee}=X_*(S)=\Hom(K^\times, S)$ and  $\Delta=\{a_1, a_{-1},
2a_1, 2 a_{-1}\}$, where $na_{\pm1}:S \rightarrow K^\times\ $ is defined by
\begin{equation}
na_{\pm 1}\left(\left(\begin{array}{ccc}t & 0 & 0\\0 & 1 & 0 \\ 0 & 0 & t^{-1}
\end{array}\right)\right)=t^{\pm n},\ n=1,2.
\end{equation} 
The affine roots are 
\begin{equation}
\phi_{\mathsf{af}} = \{2a_i+ \gamma |\, i=\pm 1, \gamma \in
\Xi\}\cup \{a_i+ \frac{1}{2}\gamma |\, i=\pm 1, \gamma \in
2\Xi\}.
\end{equation}
The inequalities $0 <a_1<\omega(\varpi_1)$ define a chamber  and the
corresponding basis is $\{a_1, 2a_{-1}+\omega(\varpi_1)\}$. Let
$\{s_1,s_2\}$ denote the corresponding reflections. They generate the
affine Weyl group $W_{\mathsf{af}}$, the infinite dihedral group.

Let $v_1$ be the vertex in the Dynkin diagram corresponding to the simple root $a_1$, and $v_2$ be
the vertex in the Dynkin diagram corresponding to the simple root
$2a_{-1}+\omega(\varpi_1)$. Following \cite{C}, the parameters of the root system are the integers
 $d(v_1)=3$ and $d(v_2)=1$. Set $q_i=q^{d(v_i)}$, $i=1,2$.

\subsection{}\label{L2} Recall that a $G$-representation is called unitary if it is
defined on a Hilbert space such that the inner product is
$G$-invariant. Assume that $G$ acts on a space $Y$ which has a
$G$-invariant measure. Then we can consider the Hilbert space $L^2(Y)$ of
square integrable functions. This is a unitary representation of $G$
via the left regular action
$$L_g(f)(y):=f(g^{-1}y),\ \ \ \ \ \  g \in G,~ y\in Y,\ f \in
L^2(Y).$$
When $Y$ is compact, the representation $L^2(Y)$ decomposes into a
direct sum of irreducible unitary representations of $G$. The spaces
that will occur naturally in our setting are of the form $Y=G/\Gamma$,
where $\Gamma$ is a discrete co-compact subgroup of $G$.

Recall also that an irreducible unitary representation of $G$ is
called a discrete series if it is a subrepresentation of $L^2(G)$, and
it is called tempered if it occurs in the Hilbert direct integral
decomposition of 
$L^2(G)$. 

\subsection{}Let $\mathcal{H}(G)$ be the Hecke algebra of complex valued, locally constant,
compactly supported functions $f$ on $G$ with the product given by the convolution
\begin{equation}
(f_1*f_2)(g):=\int_G f_1(x)f_2(x^{-1}g)\, dx, \ \ \ \ f_1,f_2 \in
  \mathcal{H}(G).
\end{equation}
Here $dx$ is a Haar measure on $G$.  Let $I$ denote an Iwahori subgroup of $G$. We normalize $dx$ such that the volume of $I$ is
one.

If $(\pi, V)$ is a smooth representation of $G$, we obtain a representation of
$\mathcal{H}(G)$ by defining 
\begin{equation}\label{Hact}
\pi(f)v :=\int_G f(x)~\pi(x)v\, dx \ \ \ \
f \in \mathcal{H}(G), v \in V.
\end{equation}
 Define $\mathcal{H}(G,I)$ to be the subalgebra of $\mathcal{H}(G)$ consisting of functions
bi-invariant under $I$. In the case $G=SU(3)$ (more generally, if $G$
is of simply connected type), the Bruhat-Tits decomposition is $G=\sqcup_{w\in W_{\mathsf{af}}} I w I$. Let $T_1$ and $T_2$ denote the characteristic functions of the double $I$-cosets with representatives $s_1$ and $s_2$, respectively. The algebra
$\mathcal{H}(G,I)$ has the following Iwahori presentation: it is generated by $T_1$ and $T_2$ subject to the
relations
\begin{equation}\label{Iwahori}
T_i^2=(q_i-1)T_i+q_i,\quad i=1,2.
\end{equation}
Denote $V^I:=\{v \in V\, | \, \pi(i)v = v, \ \text{for all} \ i \in I\}$. Then $V^I$ is
stable under the action of $\mathcal{H}(G,I)$ defined in (\ref{Hact}), and we have the natural map (obtained
by restriction)
$
\rho: \Hom_G(V,W) \longrightarrow \Hom_{\mathcal{H}(G,I)}(V^I,W^I),
$
 where
$(\pi,V)$ and $(\pi',W)$ are smooth representations of $G$.

\begin{theorem}[\cite{Bo}]\label{borel} The functor $V\to V^I$ establishes an
  equivalence of categories between the category of smooth admissible
  $G$-representations which are generated by their $I$-fixed vectors
  and the category of finite dimensional $\CH(G,I)$-modules.
\end{theorem}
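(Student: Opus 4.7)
The plan is to exhibit an explicit quasi-inverse to the functor $V \mapsto V^I$ built from the idempotent $e_I \in \CH(G)$ given by the characteristic function of $I$ (which satisfies $e_I * e_I = e_I$ because $\mathrm{vol}(I)=1$). Under the action \eqref{Hact}, for any smooth representation $(\pi,V)$ one identifies $V^I = \pi(e_I)V$ and $\CH(G,I) = e_I * \CH(G) * e_I$. Admissibility ensures that $V^I$ is finite-dimensional, so the functor indeed lands in the category of finite dimensional $\CH(G,I)$-modules.

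For the quasi-inverse, I would define $F: M \mapsto \CH(G)e_I \otimes_{\CH(G,I)} M$, where $\CH(G)e_I$ is viewed as a smooth $G$-representation via left translation and as a right $\CH(G,I)$-module via right convolution. A direct computation shows
\begin{equation}
F(M)^I \;=\; e_I\CH(G)e_I \otimes_{\CH(G,I)} M \;=\; \CH(G,I)\otimes_{\CH(G,I)} M \;\cong\; M,
\end{equation}
and by construction $F(M)$ is generated by its $I$-fixed vectors. So the unit $M \to F(M)^I$ of the adjunction is a natural isomorphism.

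The bulk of the proof is to show that the counit map
\begin{equation}
\epsilon_V : \CH(G)e_I \otimes_{\CH(G,I)} V^I \longrightarrow V, \qquad f \otimes v \mapsto \pi(f)v,
\end{equation}
is an isomorphism for every smooth admissible $V$ generated by $V^I$. Surjectivity is immediate from the generation hypothesis together with smoothness (any $v \in V$ lies in $V^J$ for some compact open $J \subseteq I$, and averaging over $I/J$ expresses $v$ in the image). For injectivity, I would use the Bruhat--Tits decomposition $G = \bigsqcup_{w \in W_{\mathsf{af}}} IwI$ from \S\ref{sec:su3} to produce an explicit decomposition of $\CH(G)e_I$ as a right $\CH(G,I)$-module with basis indexed by the cosets $G/I$, and then track how this basis acts on $V^I$.

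The main obstacle is the flatness (in fact projectivity) of $\CH(G)e_I$ over $\CH(G,I)$, which is the content of the statement that the functor $V \mapsto V^I$ is exact on the full subcategory of smooth representations generated by $I$-fixed vectors. The proof relies on the Iwahori factorization of $I$ relative to the standard chamber of \S\ref{sec:su3}: this factorization, combined with the Iwahori presentation \eqref{Iwahori} and smoothness, is precisely what forces $\epsilon_V$ to be injective. Once the exactness of $(\cdot)^I$ and the identification $F(M)^I \cong M$ are both established, the equivalence of categories follows formally by checking that unit and counit of the adjunction $(F, (\cdot)^I)$ are isomorphisms on the respective subcategories.
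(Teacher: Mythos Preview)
The paper does not prove this theorem; it is quoted from Borel \cite{Bo}, and the only thing the paper adds is a description of the inverse functor $I(E)=C_c^{\infty}(G/I)\otimes_{\CH(G,I)}E$. Your quasi-inverse $F(M)=\CH(G)e_I\otimes_{\CH(G,I)}M$ is literally the same construction, since $\CH(G)e_I=C_c^\infty(G/I)$ (right $I$-invariance of $f$ is exactly the condition $f*e_I=f$). So at the level of what the paper actually contains, your proposal matches.

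Since you go further and sketch the argument, two remarks. First, your surjectivity claim for $\epsilon_V$ is misstated: averaging a $J$-fixed vector over $I/J$ projects it into $V^I$, it does not reproduce the original vector. Surjectivity is simply the generation hypothesis: $V=\pi(\CH(G))V^I=\pi(\CH(G)e_I)V^I=\mathrm{im}\,\epsilon_V$. Second, and more substantively, injectivity of $\epsilon_V$ is the genuine content of Borel's theorem, and it does not reduce to the Iwahori factorization in the way you suggest. The standard route is to prove the key lemma that in a smooth representation generated by its $I$-fixed vectors, every nonzero $G$-subrepresentation has nonzero $I$-fixed vectors; applying this to $\ker\epsilon_V\subset F(V^I)$ and observing that $\epsilon_V$ is the identity on $F(V^I)^I\cong V^I$ forces $\ker\epsilon_V=0$. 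That lemma is where the real work lies (in Borel's paper it uses the interplay with parabolic induction and Jacquet modules), and your outline does not supply it. You also omit the check that $F(M)$ is admissible.
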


The inverse functor is described as follows. Let $(\varphi,E)$ be a representation of $\mathcal{H}(G,I)$. Set
\begin{equation}
I(E)=C_c^{\infty}(G/I)\otimes_{\mathcal{H}(G,I)}E,
\end{equation}
 where $C_c^{\infty}(G/I)$ is the space of
compactly supported smooth functions which are right invariant under $I$. The tensor
product makes sense since $\mathcal{H}(G,I)$ acts on $C_c^\infty(G/I)$ by convolution on
the right and acts on the left on $E$ by the representation $\varphi$. We view $I(E)$ as
a $G$-module, where $G$ acts on $C_c^\infty(G/I)$ by left translations. 

\subsection{}
We describe explicitly the irreducible
modules of $\CH=\CH(G,I)$. It will be convenient to use the
Bernstein-Lusztig presentation of $\CH$ (\cite{Lu}) instead of the Iwahori presentation
(\ref{Iwahori}). As a $\mathbb C$-vector space,
$\CH=\CH_W\otimes_{\mathbb C} \CA,$ where $\CH_W=\mathbb C[T]/\langle T^2=(q^{\lambda}-1)
T+q^{\lambda}\rangle,$ $\CA=\bC[\theta],$ and we have the commutation relation:
\begin{equation}\label{eq:1}
\theta T-T\theta^{-1}=(q^{\lambda}-1)\theta+(q^{\frac12(\lambda+\lambda^*)}-q^{\frac12(\lambda-\lambda^*)}).\end{equation} Here $\lambda$ and $\lambda^*$ are, in general, certain parameters, and in the case of $G=SU(3)$, $\lambda=3,$ $\lambda^*=1.$

\medskip

To go from the presentation (\ref{Iwahori}) to (\ref{eq:1}), we will set:
\begin{equation}
T=T_1,\quad \theta=\frac{1}{\sqrt{q_1q_2}}T_1T_2,\quad q^{\lambda}=q_1 \text{ and } q^{\lambda^*}=q_2.
\end{equation}
For the remainder of the section we use the Bernstein-Lusztig presentation. The
subalgebra $\CA$ is abelian, and thus its irreducible representations are
parameterized by the action of $\theta.$ 
Assume that $\theta$ acts by a scalar
$q^\nu,$ $\nu\in \Bbb C.$ Denote the corresponding character of $\CA$
by $\bC_\nu$. Define the principal series modules $X(\nu)$ as
\begin{equation}
X(\nu)=\CH_W\otimes_\CA \bC_\nu.
\end{equation}
Since every irreducible
$\CH$-module has an $\CA$-weight, it can be embedded into a principal
series $X(\nu)$. A
suitable basis for $X(\nu)=\CH_W\otimes_\CA \bC_\nu$ is $\{
(T+1)\otimes 1_\nu, (T-q^{\lambda})\otimes 1_\nu\}.$ In this basis, the
action of the two generators is:
\begin{align}
&T=\left(\begin{matrix} q^{\lambda} &0\\ 0 &{-1}
 \end{matrix}\right);\\\notag
&\theta=\frac 1{q^{\lambda}+1}\left(\begin{matrix}
q^{\lambda-\nu}+q^{\lambda+\nu}+(q^{\frac12(\lambda+\lambda^*)}-q^{\frac 12(\lambda-\lambda^*)})
&q^{\lambda-\nu}-q^{\nu}+(q^{\frac 12(\lambda+\lambda^*)}-q^{\frac12(\lambda-\lambda^*)})\\
q^{-\nu}-q^{\lambda+\nu}-(q^{\frac 12(\lambda+\lambda^*)}-q^{\frac 12(\lambda-\lambda^*)})
& q^{-\nu}+q^{\nu}-(q^{\frac 12(\lambda+\lambda^*)}-q^{\frac12(\lambda-\lambda^*)})
\end{matrix}\right).
\end{align}
To find this matrix for $\theta$, one multiplies the basis elements
on the left by $\theta$ and then commutes past $T$ using equation (\ref{eq:1}).
The eigenvalues of $\theta$ are $q^\nu$ and $q^{-\nu}.$ These are the
$\CA$-weights of the module $X(\nu).$

 We summarize the classification
of irreducible $\CH$-modules.

\begin{proposition}\label{irrH}

\begin{enumerate}

\item Every irreducible $\CH$-module appears as a subquotient of a two-dimensional
  principal series $X(\nu),$ where $\nu\in \bC/(2\pi i/\log q)$.\medskip

\item The module $X(\nu)$ is irreducible unless $\nu\in\{\pm
  \frac{\lambda+\lambda^*}2,\pm \frac{\lambda-\lambda^*}2+\frac {\pi
    i}{\log q}\}.$ \medskip

\item The one dimensional $\CH$-modules are:
\begin{equation}\label{eq:2}
\begin{aligned}
&\mathsf{St}=(T=-1,\ \theta=q^{-\frac 12(\lambda+\lambda^*)});
&\mathsf{ds}=(T=-1,\ \theta=-q^{\frac 12(\lambda^*-\lambda)});\\
&\mathsf{sph}=(T=q^{\lambda}, \ \theta=q^{\frac 12(\lambda+\lambda^*)}); 
&\mathsf{nt}=(T=q^{\lambda},\ \theta=-q^{\frac 12(\lambda-\lambda^*)}).
\end{aligned}
\end{equation}

\end{enumerate}

\end{proposition}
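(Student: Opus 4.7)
My plan is to prove the three parts of the proposition in turn, all of them leveraging the explicit matrix action for $\theta$ on $X(\nu)$ that has just been displayed. For part (1), I would first note that $\CH$ is finitely generated as a module over its center (the algebra of symmetric Laurent polynomials in $\theta$), so every irreducible $\CH$-module $V$ is finite-dimensional by Schur. Hence $\theta$ admits an eigenvector $v\in V$ with eigenvalue $q^\nu$ for some $\nu\in\bC/(2\pi i/\log q)$, and the $\CA$-equivariant map $\bC_\nu\to V$, $1_\nu\mapsto v$, extends by Frobenius reciprocity to a nonzero $\CH$-homomorphism $X(\nu)=\CH\otimes_\CA\bC_\nu\to V$. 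Irreducibility of $V$ forces surjectivity, so $V$ is a quotient, hence a subquotient, of $X(\nu)$.

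For part (3), only the defining relations are needed. In a one-dimensional module, $T$ must satisfy $T^2=(q^\lambda-1)T+q^\lambda=(T-q^\lambda)(T+1)$, so $T\in\{q^\lambda,-1\}$; substituting each value into the Bernstein-Lusztig relation (\ref{eq:1}), then multiplying through by $\theta$, produces a quadratic equation for the scalar action of $\theta$. A short computation shows that the discriminants of both quadratics collapse to the perfect square $q^{\lambda-\lambda^*}(q^{\lambda^*}+1)^2$, yielding exactly two admissible values of $\theta$ for each choice of $T$. The four resulting pairs $(T,\theta)$ match (\ref{eq:2}), producing the characters $\mathsf{St}$, $\mathsf{ds}$, $\mathsf{sph}$, $\mathsf{nt}$.

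For part (2), the decisive observation is that $T$ acts diagonally in the given basis of $X(\nu)$ with distinct eigenvalues $q^\lambda$ and $-1$, so any proper submodule must be one-dimensional and coincide with one of the coordinate axes. Each axis is $\theta$-stable precisely when the corresponding off-diagonal entry of the displayed matrix for $\theta$ vanishes. After clearing $q^\nu$, each vanishing condition becomes a quadratic in $q^\nu$ whose discriminant is the same perfect square $q^{\lambda-\lambda^*}(q^{\lambda^*}+1)^2$; solving produces the four exceptional values $\pm\tfrac{\lambda+\lambda^*}{2}$ and $\pm\tfrac{\lambda-\lambda^*}{2}+\tfrac{\pi i}{\log q}$. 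Plugging each such $\nu$ back into the diagonal entry of $\theta$ on the surviving axis recovers exactly one of the characters from part (3), consistently matching the four reducibility points with the four one-dimensional modules.

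The only real obstacle I foresee is arithmetic bookkeeping: the four exceptional parameters involve the modular shift by $\pi i/\log q$ (which encodes $q^\nu=-q^{\text{real}}$), and the matching between reducibility values, surviving axis, and resulting character requires care with signs and exponents. The saving grace is that the same perfect-square discriminant governs every relevant quadratic, so the four reducibility points and the four characters fall out of one universal computation.
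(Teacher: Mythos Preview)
Your proposal is correct and follows essentially the same approach as the paper: the paper's proof consists only of the observation that in a one-dimensional module $T$ must act by $-1$ or $q^\lambda$, after which one solves the commutation relation (\ref{eq:1}) for $\theta$, and it remarks that this ``implicitly'' determines the reducibility points for (2), while (1) is asserted without argument in the text preceding the proposition. Your write-up simply fills in the details the paper omits---the Frobenius reciprocity argument for (1), and the explicit analysis of the off-diagonal $\theta$-entries for (2)---and your discriminant computation $q^{\lambda-\lambda^*}(q^{\lambda^*}+1)^2$ is correct.
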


\begin{proof} To find the one-dimensional modules, and implicitly
  determine when $X(\nu)$ is reducible, notice that if $T$ acts by a scalar, there are two possibilities, either $T=-1$ or
$T=q^{\lambda}$. Solving in the commutation relation (\ref{eq:1}) for
 $\theta$, we obtain these four one-dimensional representations.
\end{proof}

The center of $\CH$ can be shown to equal $Z(\CH)=\mathbb
C[\theta+\theta^{-1}]$. (This is an easy particular case of a more
general result of Bernstein and Lusztig.) 
Therefore, if $(\pi,V)$ is a subquotient of
$X(\nu)$, the central character of $\pi$ is determined by $\nu$.

\subsection{}
Later we will need to know the unitary irreducible modules of
$\CH$. The algebra $\CH=\CH(G,I)$ has a natural $*$-operation defined
on functions by
\begin{equation}
f^*(g):=\overline{f(g^{-1})},\quad f\in \CH(G,I).
\end{equation}
Calculated on the generators $T,\theta$, this becomes:
\begin{equation}
T^*=T^{-1},\qquad \theta^*=T\theta^{-1}T^{-1}. 
\end{equation}

\begin{definition}  \begin{enumerate}
\item We say that an $\CH$-module $(\pi,V)$ is ($*$-)unitary  if it admits a
Hermitian form $\langle\ ,\ \rangle$ such that
\begin{equation}
\langle\pi(x)v,w\rangle=\langle v,\pi(x^*)w\rangle,\quad \text{ for
  all }x\in\CH, v,w\in V.
\end{equation}
\item (Casselman's criterion) An $\CH$-module $(\pi,V)$ is called
  tempered (resp. discrete series) if $|\chi|\le 1$
  (resp. $|\chi|<1$), for every $\CA$-weight $\chi$ of $\pi.$
\end{enumerate}
\end{definition}


\begin{proposition}\label{unitary}
\begin{enumerate}
\item In the correspondence $V\to V^I$ of Theorem \ref{borel}, the
  $G$-representation $V$ is tempered (resp. discrete series, unitary) if and only if the
  $\CH(G,I)$-representation $V^I$ is tempered (resp. discrete series, unitary).
\item The discrete series $\CH$-modules are the one
  dimensional modules $\mathsf{St}$  and $\mathsf{ds}.$ The tempered
$\CH$-modules,  that are not discrete series, are the direct summands of $X(\nu)$,
  for $\Re\nu=0$. 
\item The unitary $\CH$-modules are the subquotients of the principal
  series $X(\nu)$ in one of the following cases:
\begin{itemize}
\item $\nu\in\bR$ and $|\nu|\le \frac{\lambda+\lambda^*}2$;
\item $\Re\nu=0;$
\item $\Im\nu=\frac{\pi i}{\log q}$ and $|\Re\nu|\le \frac{\lambda-\lambda^*}2.$
\end{itemize}
\end{enumerate}
\end{proposition}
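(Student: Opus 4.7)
The plan is to treat parts (1), (2), (3) in sequence, with the bulk of the work on (3). For part (1), the functor $V\mapsto V^I$ of Theorem \ref{borel} identifies the $\CA$-weights of $V^I$ with the exponents of the Jacquet module of $V$ along the Borel, so Casselman's tempered/discrete series criterion on the $G$-side reduces directly to the $\CA$-weight criterion on the Hecke side stated in the definition. For unitarity, a $G$-invariant inner product on $V$ restricts to a $*$-invariant Hermitian form on $V^I$, since the algebra $*$ is the adjoint of convolution; conversely, a $*$-invariant form on $V^I$ extends to a $G$-invariant form on $V = I(V^I)$ by the Barbasch--Moy construction. For part (2) one applies the Casselman criterion directly to the classification in Proposition \ref{irrH}. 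The unique $\CA$-weight of each one-dimensional module is the scalar by which $\theta$ acts; reading off from (\ref{eq:2}) one gets $|\theta_{\mathsf{St}}| = q^{-\frac12(\lambda+\lambda^*)} < 1$ and $|\theta_{\mathsf{ds}}| = q^{-\frac12(\lambda-\lambda^*)} < 1$ (using $\lambda > \lambda^*$), so both are discrete series, whereas $|\theta_{\mathsf{sph}}|, |\theta_{\mathsf{nt}}| > 1$, so those fail even temperedness. An irreducible $X(\nu)$ has weights $q^{\pm\nu}$, hence is tempered iff $\Re\nu = 0$ and is never discrete series (both moduli cannot simultaneously be strictly less than $1$); and since reducibility of $X(\nu)$ occurs only at $\nu$ with $\Re\nu \neq 0$, on the imaginary line $X(\nu)$ is always an irreducible $2$-dimensional tempered module.

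For part (3) the plan has two steps: (i) identify those $\nu$ for which an irreducible subquotient of $X(\nu)$ can carry a nonzero $*$-Hermitian form, and (ii) determine when that form is positive definite. For (i), a nonzero $*$-Hermitian form on an irreducible $\CH$-module $V$ yields an isomorphism $V\simeq\overline{V}^*$, whose $\CA$-weights are the inverse complex conjugates of those of $V$. Applied to $V=X(\nu)$, this forces the multiset $\{q^\nu, q^{-\nu}\}$ to coincide with $\{q^{-\bar\nu}, q^{\bar\nu}\}$ in $\bC^\times$, which, modulo the identification $\nu\sim\nu+2\pi i/\log q$, is solved precisely by $\Re\nu=0$, by $\nu\in\bR$, or by $\nu\in\bR+\frac{\pi i}{\log q}$. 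These are exactly the three strata in the statement. On the imaginary stratum $\Re\nu=0$ the form is the specialization at $\nu$ of the tempered $L^2$-inner product on the unramified principal series and is automatically positive definite, consistent with the temperedness of $X(\nu)$ from part (2).

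For step (ii) on the two real-type strata, the $*$-invariant form on $X(\nu)$ is unique up to scalar, and I would compute its Gram matrix in the explicit basis $\{(T+1)\otimes 1_\nu, (T-q^\lambda)\otimes 1_\nu\}$ using the matrices for $T$ and $\theta$ given in Section \ref{sec:su3}. The Gram determinant is a rational function of $q^{\pm\nu}$ whose zeros are precisely the reducibility points $\nu = \pm\frac{\lambda+\lambda^*}{2}$ (real stratum) and $\nu = \pm\frac{\lambda-\lambda^*}{2}+\frac{\pi i}{\log q}$ (shifted stratum); a direct sign analysis establishes positivity on the open intervals $|\nu|<\frac{\lambda+\lambda^*}{2}$ and $|\Re\nu|<\frac{\lambda-\lambda^*}{2}$, while at the endpoints both composition factors -- $\mathsf{sph}, \mathsf{St}$ in the first case and $\mathsf{nt}, \mathsf{ds}$ in the second -- inherit positive semidefinite forms, consistent with the discrete series unitarity of $\mathsf{St}$ and $\mathsf{ds}$ from part (2). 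Beyond the endpoints, the spherical (resp. non-tempered) irreducible quotient acquires an indefinite form and fails unitarity. The main obstacle is this positivity analysis together with the continuity argument at the reducibility endpoints: one must normalize the form consistently across the reducibility locus so that its limit on each composition factor stays nondegenerate and agrees with the discrete series unitarity independently established in part (2).
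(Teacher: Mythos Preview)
Your treatment of parts (1) and (2) matches the paper's: both invoke Casselman's criterion via the Jacquet module/$\CA$-weight identification, and both cite the Barbasch--Ciubotaru unitarity transfer (you write ``Barbasch--Moy,'' but the relevant result here is \cite{BC}). Part (2) is the same direct check of $|\theta|$ against $1$ on the four one-dimensional modules and on $X(\nu)$.

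For part (3) you take a genuinely different route. The paper does \emph{not} compute the Hermitian form directly on $X(\nu)$ in the basis $\{(T+1)\otimes 1_\nu,(T-q^\lambda)\otimes 1_\nu\}$. Instead it passes to Lusztig's graded Hecke algebra $\mathbb H_\mu$ via the filtration $\CH\supset\CH\cdot\mathcal I_\xi\supset\cdots$ of \cite{Lu}, together with the unitarity-preserving correspondence of \cite{BC}. On the graded side the intertwining operator $A(\nu):\overline X(\nu)\to\overline X(-\nu)$ diagonalizes in the basis $\{(1+s)\otimes 1_\nu,(1-s)\otimes 1_\nu\}$ to $\left(\begin{smallmatrix}1&0\\0&(\mu-\nu)/(\mu+\nu)\end{smallmatrix}\right)$, so positivity is read off instantly as $|\nu|\le\mu$. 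The two real-type strata then arise as the specializations $\xi=0$, $\mu=\tfrac{\lambda+\lambda^*}2$ and $\xi=\tfrac{\pi i}{\log q}$, $\mu=\tfrac{\lambda-\lambda^*}2$; for other $\xi$ the graded algebra degenerates to an abelian one and only the tempered point survives. What this buys is that the two complementary series become the \emph{same} calculation with different $\mu$, and the endpoint/continuity issue you flag as the main obstacle disappears: the diagonal entry $(\mu-\nu)/(\mu+\nu)$ vanishes exactly at $\nu=\mu$, picking out the one-dimensional quotient with its inherited positive form. Your direct Gram-matrix approach is entirely valid---the paper itself remarks parenthetically that one can ``prove the claim directly, without appealing to the graded Hecke algebra''---and has the virtue of being self-contained, but the $\theta$-matrix in the affine setting is considerably messier than the graded $\epsilon$, so the sign analysis and the normalization at the reducibility locus require more bookkeeping than in the paper's route.
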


\begin{proof} (1) The claim about tempered modules and discrete series
  is well-known (see \cite{Cas}). The correspondence for unitary
  modules is a particular case of Theorem 1 in \cite{BC}.

(2) This is obvious from Proposition \ref{irrH}, just recall that with our notation
$\lambda>\lambda^*.$

(3) One way to prove this is to make use of \cite{BC}, where we can reduce this question to one about Lusztig's
graded Hecke algebra \cite{Lu}. (It is of course possible to prove the
claim directly, without appealing to the graded Hecke algebra.)

Let $\mathbb H_\mu$ be the $\mathbb
C$-algebra with unit generated by $s$ and $\epsilon$ subject to the
relations
\begin{align}
s^2=1,\quad \epsilon\cdot s+s\cdot \epsilon=2\mu,
\end{align}
where $\mu$ is a nonnegative scalar. This algebra has a $*$-operation as
well, defined on the generators by 
\begin{equation}s^*=s,\quad \epsilon^*=\epsilon+\mu s,\end{equation}
and again we can talk about unitary modules. The classification of the
irreducible $\mathbb H_\mu$-modules is similar, but simpler, than the
one for $\CH$-modules. More precisely, denote by $\mathbb
C[W]=\mathbb C[s]$ and
$\mathbb A=\mathbb C[\epsilon]$, and define the principal series $\overline
X(\nu)=\mathbb C[W]\otimes_{\mathbb C} \mathbb C_\nu$, where
$\mathbb C_\nu$ is a character of
$\mathbb A$, $\nu\in\mathbb C.$ Then, we have:
\begin{itemize}
\item every irreducible $\mathbb H_\mu$-module is a subquotient of a
  $\overline X(\nu)$;
\item $\overline X(\nu)$ is irreducible unless $\nu=\pm\mu$;
\item the one-dimensional irreducible $\mathbb H_\mu$-modules are
  $(s=1,\epsilon=\mu)$ and $(s=-1,\epsilon=-\mu)$. 
\end{itemize}
We say that an $\mathbb
H_\mu$-module has real central character if the eigenvalues of
$\epsilon$ are real scalars, or equivalently, if the parameter $\nu$
is in $\mathbb R.$ 

The unitary modules are easily determined too. A hermitian form on
$\overline X(\nu)$ is equivalent with an intertwining operator between
$X(\nu)$ and the hermitian dual of $X(\nu)$, which is
$X(-\overline\nu).$ When $\nu\ge 0$, the operator is 
\begin{equation}
A(\nu):
\overline X(\nu)\to \overline X(-\nu),\quad A(\nu)(x\otimes
1_\nu)=\frac 1{\nu+\mu} (\epsilon\cdot s+\mu)\cdot x\otimes 1_{-\nu}. 
\end{equation}
Using the basis $\{(1+s)\otimes 1_\nu,(1-s)\otimes 1_\nu\}$, one can
immediately find that
$[A(\nu)]=\left(\begin{matrix}1&0\\0&\frac{\mu-\nu}{\mu-\nu}\end{matrix}\right)$. This
    means that $A(\nu)$ is positive semidefinite if and
only if $-\mu\le\nu\le\mu.$

The relation to $\CH$ is as follows. Fix $\xi\in i(\mathbb R/\log
q)$. Then \cite{Lu} defines an ideal $\mathcal I_\xi$ of $\CA$ such
that the associated graded object to the filtration $\CH\supset
\CH\cdot \mathcal I_\xi\supset \CH\cdot \mathcal I^2_\xi\supset\dots$
is a graded Hecke algebra of the type defined here. The point is that one
obtains a bijection between irreducible $\CH$-modules parameterized by
$\nu$ such that $\Im\nu=\xi$ and $\mathbb H$-modules parameterized by
$\Re\nu$. This correspondence preserves unitarity too. The two interesting
cases are $\xi=0$, in which case the corresponding graded algebra is
$\mathbb H_\mu$, $\mu=\frac{\lambda+\lambda^*}2$, and the case
$\xi=\frac {\pi i}{\log q}$ when the corresponding graded algebra is
$\mathbb H_\mu$, $\mu=\frac{\lambda-\lambda^*}2$. This explains the
unitary ``complementary series'' in our proposition. (When $\xi\neq
0,\frac {\pi i}{\log q}$, the corresponding graded Hecke algebra is
not one of the $\mathbb H_\mu$'s, but rather just an abelian
algebra. Consequently, its unitary dual with ``real central
character'' is a point, corresponding to the tempered module $X(\nu)$, $\Re\nu=0.$)
\end{proof}

\section{Adjacency operators and quotients of the $SU(3)$-tree}

We retain the notation from the previous sections. 
The Bruhat-Tits tree of $SU(3)$ is a $(q^3+1,q+1)$-biregular graph, where $q$ is the cardinality of   the residue field of $K$. We denote it by $\tilde X.$ (One obtains the same graph if one considers $U(3)$ instead of $SU(3).$) 

Recall that $I$ is an Iwahori subgroup. Set $U_i:=I \cup Is_iI$, $i=1,2$. The subgroups $U_i$, $i=1,2$, contain $I$ properly and are
representatives of the $G$-conjugacy classes of maximal (open) compact
subgroups. In fact, $I=U_1\cap U_2$.  Define a length function on $G$ as in
\cite{H-H} and let $U(=U_1)$ be the set of elements of length $0$. Let $\Gamma$ be a
discrete co-compact subgroup of $G$ as in \cite{H-H}, such that $\Gamma$ is torsion
free, $\Gamma \cap x^{-1}Ux=\{1\}$ for any $x \in G$, and $\#(U\backslash G / \Gamma)
< \infty$.

Let $X$ be the quotient graph $\tilde{X}/\Gamma$. Then $X$ is a finite, connected,
biregular, bipartite graph with valencies $q^3+1$ and $q+1$,  and therefore the discussion in the previous
sections applies to $X$. We would like to investigate when $X$ is a
Ramanujan bigraph. For this, we will make use of the Iwahori-Hecke
algebra $\CH=\CH(G,I)$ defined before.

\subsection{}\label{sec:4.1} We summarize the results from  \cite[\S 5,6]{H}
establishing the relationship  between the algebra generated by the
colored edge adjacency operators and $\CH(G,I).$ 

Let $\mathbb{Z}[E(\tilde{X})]$ be the free $\mathbb{Z}$-module
over the set $E(\tilde{X})$ of edges of $\tilde{X}$. We define $T_1,T_2$ to be the
elements of $\End(\mathbb{Z}[E(\tilde{X})])$ given by 
\begin{equation}
T_i(e):= \displaystyle{\sum_{e'\in
\tilde{E}_i(e)\setminus \{e\}}} e'\ \ \ (i=1,2),
\end{equation}
 where $\tilde{E}_i(e)$ is the set of
edges in $\tilde X$ incident to the vertex of $e$ that lies in $V_i$.

Let $\bC[E(X)]$ be the space of $\mathbb{C}$-valued functions on $E(X)$, the edges of
$X$. It carries an inner product 
\begin{equation}\label{innprod}
(f,f'):=\sum_{e\in E(X)} (f(e),f'(e))_\bC,
\end{equation}
where $(~,~)_\bC$ is the usual inner product in $\bC$.
Since the action of $\Gamma$ preserves the incidences in $\tilde{X}$, the
operators $T_1, T_2$ induce naturally endomorphisms on $\bC[E(X)]$ (denoted $T_1,T_2$ again),
\begin{equation}(T_if)(e):=\sum_{e' \in
    E_i(e)}f(e')-f(e)\ \ \ (i=1,2).
\end{equation}
Moreover, $T_1,T_2$ are isometries with respect to the inner product (\ref{innprod}). 
Recall the Zeta function $Z_X$ from definition \ref{zeta}. By the Main Theorem (I) of \cite{H}, we have 
\begin{equation}\label{zeta2}
Z_X(u)^{-1}=\det(I-{T_1T_2}u). 
\end{equation}
Let $\mathbb{C}[T_1,T_2]$ be the $\mathbb C$-subalgebra of
$\End_\bC(\mathbb Z[E(X)])$ generated by $T_1, T_2$.
It is a non-commutative ring of polynomials in $T_1, T_2$ with fundamental relations
$$T^2_i=(q_i-1)T_i+q_i, \ \ \ (i=1,2).$$
This means that $\mathbb C[T_1,T_2]\cong \CH(G,I).$
In the previous section (Proposition \ref{irrH}), we saw that the
irreducible $\CH(G,I)$-modules lie in two-dimensional principal series
$X(\nu),$ where $\nu$ is in the parameter space $\bC/(2\pi i/\log
q)$. To every irreducible Hecke module $\varphi$, one associates a characteristic
polynomial:
\begin{equation}
\begin{aligned}
p_\varphi(u)&=\det(1-\varphi(T_1T_2)u)=\det(1-\sqrt{q_1q_2}\varphi(\theta)
u)\\&=1-\sqrt{q_1q_2}~\Tr_\varphi(\theta)+q_1q_2 u^2,
\end{aligned}
\end{equation}
when $\varphi$ is two dimensional; here $\theta$ is the generator from (\ref{eq:1}). Notice that the one dimensional modules have the following characteristic polynomials:
\begin{equation}
\begin{aligned}
&p_{\mathsf{St}}(u)=1-u; &p_{\mathsf{ds}}(u)=1+q_2u;\\
&p_{\mathsf{sph}}(u)=1-q_1q_2u; &p_{\mathsf{nt}}(u)=1+q_1u.
\end{aligned}
\end{equation}
We need to determine  which representations $\varphi$ of $\CH$ have the property
that $p_\varphi(u)$ occurs in $Z_X(u)^{-1}$. Notice that from
(\ref{prod}), one immediately sees that $p_{\mathsf{St}}(u)$ occurs
with multiplicity exactly $r$, $p_{\mathsf{sph}}(u)$ has multiplicity one, while $p_{\mathsf{ds}}(u)$ occurs with
multiplicity at least $n_2-n_1.$ More precisely,
$p_{\mathsf{ds}}(u)$ occurs in $Z_X(u)^{-1}$ with multiplicity exactly $n_2-n_1$ if and only if $X$ is weakly Ramanujan. The only way the multiplicity of $p_{\mathsf{ds}}(u)$ is greater then $n_2-n_1$ is if $p_{\mathsf{nt}}(u)$ occurs in $Z_X(u)^{-1}$.

\subsection{}
Recall the space
$L^2(G/\Gamma)$ with the left regular representation of $G$. Consider
the subspace of $I$-invariant vectors $L^2(G/\Gamma)^I=L^2(I\backslash
G/\Gamma)=\bC[E(X)].$ If $\pi$ is a unitary irreducible representation
of $G$, let $m_{\Gamma}(\pi)$ represent the multiplicity of $\pi$ in
$L^2(G/\Gamma)^I$. The two discrete series of $G$, whose $I$-fixed
vectors form the $\mathcal H(G,I)$-modules $\mathsf{St}$ and
$\mathsf{ds}$, occur in this space, for every such
$\Gamma.$ Since $G/\Gamma$ is compact, the trivial representation also occurs.

Let  $(\pi,V)$ be an irreducible unitary
smooth representation of $G$ such that $V^I\neq \{0\}$. 
Let $(\varphi,
V^I)$ be the corresponding irreducible representation of
$\mathcal{H}(G,I)$. By Proposition \ref{unitary}, this is a unitary
finite dimensional module of $\CH(G,I).$

Putting these together, one finds (Main Theorem (IV) of \cite{H}):
\begin{equation}\label{mult}
m_{\Gamma}(\pi)= \, \mbox{multiplicity of}\ p_{\varphi}(u) \ \mbox{in}\
Z_X(u)^{-1}.
\end{equation}
If $m_\Gamma(\pi)>0$, and $\varphi\notin\{\mathsf{St},\mathsf{ds}\}$, we
see that the corresponding factor in $Z_X(u)^{-1}$ is
$p_\varphi(u)=1-\sqrt{q_1q_2}~\Tr_\varphi(\theta)+q_1q_2u^2$ on one
hand, but on the other is of the form
$(1-(\lambda_j^2-q_1-q_2)u+q_1q_2u)$. This means that
$\Tr_\varphi(\theta)=\frac {\lambda_j^2-q_1-q_2}{\sqrt{q_1q_2}}$, for
some eigenvalue $\lambda_j$ of $\Ad(X)$. The Ramanujan condition
(\ref{ramanujan}) is therefore equivalent to: 
\begin{equation}
|\Tr_\varphi(\theta)| \leq 2, \text{ \emph{i.e.}, $\varphi$ is tempered (by Proposition \ref{unitary}).}
\end{equation}
We phrase this condition as follows.

\begin{conjecture}[Ramanujan conjecture]\label{conj} Every nontrivial irreducible
  unitary $\mathcal H(G,I)$-module
that appears in the decomposition of $L^2(G/\Gamma)^I=L^2(I\backslash
G/\Gamma)$ is tempered.
\end{conjecture}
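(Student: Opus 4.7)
The conjecture is a local Ramanujan-type statement governed by the global automorphic spectrum of $G$ along $\Gamma$, and no purely local or combinatorial argument can settle it. My plan is to reduce it, for the arithmetic $\Gamma$ relevant to the paper, to Rogawski's classification of the discrete automorphic spectrum of $U(3)$.

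The first step is to realize $\Gamma$ as a congruence subgroup arising from a global inner form $\mathbb G$ of $SU(3)$. Building $\mathbb G$ from a degree-$3$ central simple algebra $D$ over a quadratic extension $E/F$ of a global field, with an involution of the second kind, one chooses $\mathbb G$ so that $\mathbb G_v \cong G$ at a chosen finite place $v$ and $\mathbb G_\infty$ is compact. Strong approximation together with compactness at infinity then identifies $L^2(G/\Gamma)^I$ (with appropriate level structure away from $v$) with the $I$-invariants in the $v$-component of the automorphic representation on $\mathbb G(F)\backslash\mathbb G(\mathbb A_F)$. This converts the conjecture into a statement about local components at $v$ of automorphic representations with Iwahori-fixed vectors.

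Next, I would invoke Rogawski's classification: the discrete automorphic spectrum decomposes into tempered stable $L$-packets together with a controlled family of non-tempered CAP representations lifted endoscopically from $U(2)\times U(1)$, and a collection of one-dimensional characters. By Proposition \ref{unitary}, the non-tempered unitary $\CH(G,I)$-modules form a short explicit list -- the one-dimensional modules $\mathsf{sph}$ and $\mathsf{nt}$ together with the complementary series attached to $\xi=0$ and $\xi=\pi i/\log q$ -- so the task reduces to ruling out these finitely many candidates as local components at $v$ of non-trivial discrete automorphic representations.

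The main obstacle is handling the CAP representations. They are built globally from Hecke characters via endoscopic transfer, and their Iwahori-spherical local components at $v$ fall precisely among the non-tempered modules listed above. The key input I would hope to invoke is that when $D$ is not split over $E$, the Jacquet-Langlands obstruction at the ramified places of $D$ forbids the transfer of CAP packets to $\mathbb G$, so the conjecture holds outright. In the split case one cannot rule out CAP contributions \emph{a priori}, and the best one can hope for is the equivalence between the Ramanujan and the weakly Ramanujan conditions, matching the second theorem of the introduction.
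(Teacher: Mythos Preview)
Your proposal is essentially the paper's own argument. The statement is recorded in the paper as a \emph{conjecture}, not a theorem, and the paper does not attempt a proof for arbitrary discrete cocompact $\Gamma$; rather, in \S\ref{sec:3.3} it verifies the conjecture for the arithmetic lattices $\Gamma_{i,p,n}$ by exactly the route you outline: pass to a global inner form $\mathbb G$ with $\mathbb G_\infty$ compact, use strong approximation to embed $L^2(\mathbb G_p/\Gamma_{i,p,n})^{I}$ into the automorphic spectrum, and then invoke Rogawski's classification. The conclusions you reach---Ramanujan $\Leftrightarrow$ weakly Ramanujan in general, and unconditional Ramanujan when $D\neq M_3(E)$---are precisely the paper's Corollaries following Theorem~\ref{main}.

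One sharpening worth noting: the paper extracts from \cite{R} the more precise statement that among all nontrivial Iwahori-spherical unitary representations of $G$, the \emph{only} one that can occur as a local component of an automorphic representation of $\mathbb G$ and fail to be tempered is the one-dimensional $\CH(G,I)$-module $\mathsf{nt}$. In particular, the complementary series you list (for $\xi=0$ and $\xi=\pi i/\log q$) are already excluded by Rogawski's classification even in the split case $D=M_3(E)$; they do not arise as local components of CAP packets. This is what makes the dichotomy so clean: either $\mathsf{nt}$ appears (and then $p_{\mathsf{nt}}(u)p_{\mathsf{ds}}(u)$ contributes an extra eigenvalue $0$ to $\Ad(X)$, violating weak Ramanujan) or it does not, and the graph is Ramanujan.
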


Recall that the explicit description of unitary $\mathcal H(G,I)$-modules is given in Proposition \ref{unitary}. We have obtained the following criterion:

\begin{theorem}\label{t:crit} Let $G=SU(3)$ be the unitary
  group in three variables defined in section \ref{sec:su3}. Let $\Gamma$ be a discrete, co-compact subgroup of $G$ which acts on $G$ without fixed points. If $\tilde{X}$ is the Bruhat-Tits tree associated with $G$, then the building quotient $X=\tilde{X}/\Gamma$ is a Ramanujan bigraph if and only if $G$ satisfies Conjecture \ref{conj}. 

\end{theorem}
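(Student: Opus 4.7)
The plan is to match the two expressions for $Z_X(u)^{-1}$ from \S\ref{sec:4.1}: Hashimoto's product (\ref{prod}) written through the eigenvalues $\lambda_j$, and the determinantal identity (\ref{zeta2}) computed through the Hecke-algebra action on $\bC[E(X)]$. The equivalence will then reduce to a trace computation together with the unitary classification.

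First I would decompose the compact unitary $G$-representation $L^2(G/\Gamma)$ into irreducibles and pass to $I$-invariants. By Theorem \ref{borel} and Proposition \ref{unitary}(1), this produces an $\CH(G,I)$-module decomposition
\[
\bC[E(X)] \;=\; L^2(G/\Gamma)^I \;=\; \bigoplus_{\varphi} m_\Gamma(\pi_\varphi)\,V_\varphi,
\]
where $\varphi$ runs over irreducible unitary $\CH$-modules with $V_\varphi\neq 0$. Multiplicativity of the determinant on direct sums combined with (\ref{zeta2}) yields
\[
Z_X(u)^{-1} \;=\; \prod_{\varphi} p_\varphi(u)^{m_\Gamma(\pi_\varphi)},\qquad p_\varphi(u)=\det\bigl(I-\varphi(T_1T_2)\,u\bigr).
\]
For two-dimensional $\varphi$ this is $p_\varphi(u)=1-\sqrt{q_1q_2}\,\Tr_\varphi(\theta)\,u + q_1q_2\,u^2$; the one-dimensional $\varphi$ produce the four polynomials listed in (\ref{eq:2}).

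Next I would compare this factorization with (\ref{prod}). The $(1-u)^{r-1}$ piece together with one $(1-u)$ extracted from the $\lambda_1=\sqrt{(1+q_1)(1+q_2)}$ factor accounts for $r$ copies of $p_{\mathsf{St}}$; the complementary $(1-q_1q_2 u)$ piece of the $\lambda_1$ factor is $p_{\mathsf{sph}}$; and $(1+q_2u)^{n_2-n_1}$ provides the base $n_2-n_1$ copies of $p_{\mathsf{ds}}$. Every remaining quadratic factor $1-(\lambda_j^2-q_1-q_2)u+q_1q_2u^2$ (for $j\ge 2$) that is irreducible over $\bR$ must arise from some two-dimensional $\varphi$, and matching coefficients forces
\[
\sqrt{q_1q_2}\,\Tr_\varphi(\theta) \;=\; \lambda_j^2 - q_1 - q_2.
\]

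Finally, since $\theta$ acts on a principal series $X(\nu)$ with eigenvalues $q^{\pm\nu}$, Proposition \ref{unitary}(2) says that a unitary two-dimensional $\varphi$ is tempered iff $\Re\nu=0$ iff $|\Tr_\varphi(\theta)|\le 2$. Thus for each two-dimensional $\varphi$ appearing in the decomposition, the Ramanujan bound $|\lambda_j^2-q_1-q_2|\le 2\sqrt{q_1q_2}$ is equivalent to temperedness of $\varphi$. The one remaining wrinkle is the non-tempered, non-trivial one-dimensional module $\mathsf{nt}$ with $p_{\mathsf{nt}}(u)=1+q_1u$: it divides $Z_X(u)^{-1}$ exactly when some quadratic factor degenerates as $(1+q_1u)(1+q_2u)$, i.e.\ when some $\lambda_j=0$, and in that case $|\lambda_j^2-q_1-q_2|=q^3+q > 2q^2 = 2\sqrt{q_1q_2}$, so $X$ fails to be Ramanujan regardless. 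Since $\mathsf{sph}$ corresponds to the trivial $G$-representation and is excluded from Conjecture \ref{conj}, combining these observations shows that $X$ is a Ramanujan bigraph iff every non-trivial irreducible unitary $\CH$-module appearing in $L^2(I\backslash G/\Gamma)$ is tempered, which by Theorem \ref{borel} and Proposition \ref{unitary}(1) is the statement of Conjecture \ref{conj}. The conceptual heart, ``trace bound $=$ tempered,'' is immediate from the classification; the main obstacle is the careful bookkeeping in the comparison of factors, in particular isolating which one-dimensional modules arise from the trivial factors of (\ref{prod}) and verifying that $\mathsf{nt}$ can appear only in the degenerate case $\lambda_j=0$.
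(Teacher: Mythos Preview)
Your argument is correct and follows the same route as the paper: decompose $L^2(G/\Gamma)^I=\bC[E(X)]$ as an $\CH(G,I)$-module, factor $Z_X(u)^{-1}$ via (\ref{zeta2}) as a product of characteristic polynomials $p_\varphi(u)$, compare with Hashimoto's formula (\ref{prod}), and read off $\sqrt{q_1q_2}\,\Tr_\varphi(\theta)=\lambda_j^2-q_1-q_2$ so that the Ramanujan bound becomes $|\Tr_\varphi(\theta)|\le 2$, i.e.\ temperedness. Your explicit treatment of the one-dimensional module $\mathsf{nt}$ (showing it occurs precisely when some $\lambda_j=0$, hence exactly when $X$ fails to be weakly Ramanujan) is a point the paper records more briefly at the end of \S\ref{sec:4.1}, but the substance is identical.
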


\subsection{}\label{sec:3.3}
 In this section, let $k$
denote a global field of characteristic $0$, and let $E/k$ be a
separable quadratic extension. Denote by $\ \bar{ }\ $ the conjugation of $E$
with respect to $k$. Let $D$ be a simple $9$-dimensional algebra with
center $E$ and norm $N$, and let $\alpha:D\to D$ be an
anti-automorphism such that $\alpha(x)=\overline x$, for all $x\in E$ (\emph{i.e.}, an
automorphism of second kind). For the classification of pairs
$(D,\alpha)$ we refer to \cite{KMRT}, especially Theorems (3.1) and
(19.6). Every such $D$ is a cyclic algebra over $E$
(Theorem (19.2) and Proposition (19.15) in \cite{KMRT}). More
precisely, every $(D,\alpha)$ arises as follows. Let $L$ be a $3$-dimensional algebra over $E$, which is $\mathbb
Z/3\mathbb Z$-Galois over $E$ and $S_3$-Galois over $k$. This means that $L$ has an automorphism $\sigma$
of order $3$ and an automorphism $\iota$ of order $2$, such that
$\langle\sigma,\iota\rangle\cong S_3$, $\iota|_E=\bar\ $, and the
fixed points of $\sigma$ in $L$ are $L^\sigma=E$, while the fixed
points of $\langle\sigma,\iota\rangle$ in $L$ are
$L^{\langle\sigma,\iota\rangle}=k$. Notice that
$\sigma\circ \iota=\iota\circ\sigma^2$ on $L.$ Then
set 
\begin{equation}\label{eq:D}
\begin{aligned}
&D=L\oplus Lz\oplus Lz^2,\text{ with}\\
&z\ell=\sigma(\ell)z,\quad z^3=a,
\end{aligned}
\end{equation}
for some fixed $a\in k^\times$, and set the involution $\alpha$ to be
\begin{equation}\label{eq:L}
\alpha(z)=z,\quad \alpha(\ell)=\iota (\ell),\text{ for all }\ell\in L.
\end{equation}

 Define $\mathbb G$ to be an inner form
of $SU(3)$ determined by $(D,\alpha)$, more precisely $\mathbb
G=\{g\in D^*: \alpha(g)g=1, N(g)=1\}.$ For example, if $D\cong M_3(E),$
meaning in (\ref{eq:D}) that $L\cong E^3$,
then $\alpha(g)=\Phi ~\overline g^T ~\Phi^{-1}$, for some Hermitian form
$\Phi$. The quasisplit form from section \ref{sec:su3} is such an
example. For the explicit connection with the realizations
(\ref{eq:D}, \ref{eq:L}), we refer to Example (19.17) in \cite{KMRT}. 

\smallskip

For every place $v$, we denote $\mathbb G_v=\mathbb
G(k_v)$ and $D_v=D\otimes_k k_v$. For the finite places $v$ of $k$, we have
the following possible cases (see \cite{Sch} or \S1.9
in \cite{R})
for the group $\mathbb G_v=\{g\in D_v^*: \alpha(g)g=1, N(g)=1\}$ over
the local field $k_v$:

\begin{enumerate}
\item[(a)] if $v$ splits into $v=w\overline w$ in $E$, then
  $D_v=D_w\oplus D_{\overline w}$, and $\mathbb G_v\cong
  D_w^1\cong D_{\overline w}^1$ (norm one units);
\item[(b)] if $v$ remains prime in $E$, then $\mathbb G_v\cong G$, where $G$ is the quasisplit unitary group over $k_v$ defined in section \ref{sec:su3}.
\end{enumerate}

 In order to use the results of \cite{R},  we restate the
previous criterion, Theorem \ref{t:crit}, from a global perspective.

\begin{theorem}\label{main} Let $v$ be a finite place of $k$ with
  $E_v/k_v$ unramified, and such that at the place $v$, $\mathbb
  G_v\cong G$.  Let $\Gamma_v$ be a discrete, co-compact subgroup of
  $\mathbb G_v$ which acts on $\mathbb G_v$ without fixed points. If
  $\tilde{X_v}$ is the Bruhat-Tits tree associated with $\mathbb G_v$,
  then the building quotient $X_v=\tilde{X_v}/\Gamma_v$ is a Ramanujan
  bigraph if and only if $\mathbb G_v$ satisfies Conjecture \ref{conj}. 

\end{theorem}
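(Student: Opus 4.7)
The plan is to obtain Theorem \ref{main} directly from Theorem \ref{t:crit}, since under the hypothesis $\mathbb{G}_v \cong G$ the global setup collapses to the purely local situation already treated. The present statement is essentially a repackaging of the local criterion in a form compatible with the global automorphic input from \cite{R} that will be invoked in the next step.

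First I would fix an isomorphism $\mathbb{G}_v \cong G$ of reductive groups over $k_v$. All the structures used in Theorem \ref{t:crit} are intrinsic to the isomorphism class: the Iwahori subgroup (up to conjugation), the Bruhat-Tits building, so that $\tilde{X_v} \cong \tilde{X}$ as $(q^3+1,q+1)$-biregular bigraphs, the Iwahori-Hecke algebra $\mathcal{H}(\mathbb{G}_v,I_v) \cong \mathcal{H}(G,I)$ with its Bernstein-Lusztig presentation, and the classification of unitary and tempered modules from Proposition \ref{unitary}. The assumption that $E_v/k_v$ is unramified is precisely what guarantees that we are in the $(q^3+1,q+1)$-biregular regime treated in Section \ref{sec:su3}, as opposed to the $(q+1)$-regular tree arising in the ramified case (which falls under \cite{lps}). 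Under the chosen isomorphism, the subgroup $\Gamma_v$ transports to a discrete, co-compact, fixed-point-free subgroup of $G$ of the type required by Theorem \ref{t:crit}.

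Next I would apply Theorem \ref{t:crit} verbatim to conclude that $X_v = \tilde{X_v}/\Gamma_v$ is a Ramanujan bigraph if and only if every nontrivial irreducible unitary representation of $G$ with $I$-fixed vectors occurring in $L^2(G/\Gamma_v)$ is tempered. Transporting the statement back to $\mathbb{G}_v$ via the inverse isomorphism produces precisely Conjecture \ref{conj} for $\mathbb{G}_v$, since temperedness is detected by the Casselman bound on $\mathcal{A}$-weights of the associated Hecke-module (Proposition \ref{unitary}), which is also intrinsic to the isomorphism class.

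There is no genuine obstacle here: the entire content is bookkeeping to verify that the local Hecke-algebraic data, the identification $\bC[E(X_v)] \cong L^2(I_v\backslash \mathbb{G}_v/\Gamma_v)$, and Hashimoto's multiplicity formula \eqref{mult} all transport across $\mathbb{G}_v \cong G$. Since each of these constructions is intrinsic to the group up to isomorphism, this is automatic, and the theorem follows immediately.
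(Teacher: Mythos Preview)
Your proposal is correct and matches the paper's approach exactly: the paper explicitly introduces Theorem \ref{main} as a restatement of Theorem \ref{t:crit} ``from a global perspective'' and provides no separate proof. Your transport-of-structure argument under the isomorphism $\mathbb G_v\cong G$ is precisely the (trivial) content needed, and is in fact more detailed than what the paper writes.
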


By the Strong Approximation Theorem (\cite{K}), verifying Conjecture \ref{conj} is equivalent with deciding which representations of $\mathbb G_v=G$ with $I$-fixed vectors occur as local components of the automorphic representations of $\mathbb G$ over the adeles. We explain this connection now.

 In order to simplify notation, let us assume that $k=\mathbb Q$, and
 we will denote a prime, as usual, by $p$. Assume that $E/\mathbb Q$ is imaginary.
  Let $\mathbb A$ denote the ring of adeles of $\mathbb Q$, and let
  $\mathbb{A}_p$ be the ring of adeles  without the
  factor at the place $p$. Let $\mathbb Z$ be the ring of integers of
  $\mathbb Q$. 

Fix a finite prime $p$ as in Theorem \ref{main}. Since we are looking at forms of unitary groups of {\it
    odd} order, there exists a $\mathbb Q$-group $\mathbb G$ such that
  $\mathbb G_p\cong G$; in particular, this means that $\mathbb G_p$
  is not compact. (In fact, $\mathbb G_v\cong G$ at almost all finite places.) Moreover, we may require that $\mathbb G(\mathbb R)$
  be compact. Such groups $\mathbb G$ exist, see for example \cite[\S3.3]{CHT}. 

 Let $\mathbb Z[p^{-1}]$
denote the subring of $\mathbb Q$ consisting of all rational numbers
whose denominators are powers of $p$. One thinks of $\mathbb
Z[p^{-1}]$ as being embedded diagonally into $\mathbb R\times \mathbb
Q_p$. A theorem of Borel \cite{Bo2} implies that $\Gamma_p=\mathbb G(\mathbb Z[p^{-1}])$ is a lattice in $\mathbb G(\mathbb R)\otimes  \mathbb G_p$. For every positive integer $n$ coprime to $p$, define 
\begin{equation}\label{gamma}
\Gamma(n)=\ker (\mathbb G(\mathbb Z[p^{-1}])\to \mathbb G(\mathbb Z[p^{-1}]/n\mathbb Z[p^{-1}]), \quad \Gamma_p(n)=\Gamma(n)\cap \mathbb G_p.
\end{equation}
Therefore $\Gamma_p(n)$ is a discrete cocompact lattice in $\mathbb G_p$.

Assume that the factorization of $n$ in $\mathbb Z$ into primes is $n=\prod_{i=1}^r p_i^{d_i}.$ Define:
\begin{equation}
\begin{aligned}
&K_{p_i}=\ker(\mathbb G(\mathbb Z_{p_i})\to \mathbb G(\mathbb Z_{p_i}/{p_i}^{d_i}\mathbb Z_{p_i})),\ \  i=1,\ldots, r;\\
&K_\ell=\mathbb G(\mathbb Z_\ell),\ \ \ell\neq p_i, p, \  \ell<\infty;\\
&K_p=I_p\ \ (\text{the Iwahori subgroup}); \\
&K_\infty=SU(3) \ \ (\text{the compact unitary group over }\mathbb R).
\end{aligned}
\end{equation}
Set 
\begin{equation}
K^n=\prod_{l\le \infty} K_\ell
\quad \text{ and }\quad
K^n_{(p)}=\prod_{l\le \infty, \ell\neq p} K_\ell.
\end{equation}
These are  compact open subgroups of $\mathbb G(\mathbb{A})$  and  $\mathbb G(\mathbb{A}_p)$  respectively. If we
assume that $\mathbb G(\mathbb R)=K_\infty$, in other words that
$\mathbb G(\mathbb R)$ is compact, then $\mathbb G(\mathbb Q)\cap K^n_{(p)}=\Gamma_p(n)$. 

By \cite[Theorem 5.1]{Bo2}, The number of double cosets in $$K^n\setminus \mathbb G(\mathbb A)/\mathbb G(\mathbb Q)$$is finite and, therefore, the number of double cosets in $$K^n_{(p)}\mathbb G_p\backslash \mathbb G(\mathbb A)/\mathbb G(\mathbb Q)$$ is finite. Let $\{x_1, \ldots, x_s\} $ be a set of representatives of these cosets. We have

\begin{equation}\mathbb G(\mathbb A)= \bigcup_{i=1}^s  \left( K^n_{(p)}\mathbb G_p \right)
x_i \mathbb G(\mathbb Q).\end{equation}
Consider the group $\Gamma' _{i,n}=K^n_{(p)}\mathbb G_p \cap x_i
\mathbb G x_i^{-1}$. Note that when $x_i$ is the identity, $\Gamma'
_{i,n}$ is precisely $\Gamma(n)$ defined in (\ref{gamma}). Since
$K_{\infty}$ is compact, the projection of $\Gamma' _{i,n}$ on
$\mathbb G_p$, denoted by $ \Gamma' _{i,p,n}$, remains a discrete
subgroup. Each group $\Gamma' _{i,p,n}$ is finitely generated. Then,
by \cite[Lemma 8]{Sel}, $\Gamma' _{i,p,n}$ has a normal subgroup
$\Gamma _{i,p,n}$ of finite index which has no nontrivial elements of
finite order. Thus, each element of $\Gamma _{i,p,n}$ different from
the identity  acts on $I_p\backslash \mathbb G_p$ without fixed
points. As noted before, when $x_i$ is the identity element,
$\Gamma_{i,p,n}$ is precisely  $\Gamma_p(n)$. We have

\begin{equation}
L^2(K_p^n\backslash\mathbb G(\mathbb A)/\mathbb G(\mathbb Q))=\bigotimes_{i=1}^s L^2(\mathbb G_p/\Gamma_{i,p,n}).
\end{equation}
In other words, every irreducible $\mathbb G_p$-representation that occurs in \newline $L^2(\mathbb G_p/\Gamma_{i,p,n})$ must be the local factor of an automorphic representation of $\mathbb G$.
Define
\begin{equation}\label{tree}
X_{i,p,n}:= K_p\backslash\mathbb G_p/ \Gamma_{i,p,n}.
\end{equation}
 In \cite{R}, Rogawski completely classifies the automorphic
 representations of $\mathbb G$. (A convenient reference for the
 needed results from \cite{R} is  \cite[\S 11]{Ba}.) Recall that we
 are assuming that at the place $p$, $\mathbb G_p=G$. In turns out
 that the only nontrivial non-tempered automorphic representation of
 $\mathbb G_p$ is the unitary representation whose $I$-fixed vectors
 form the $\mathcal H(G,I)$-module $\mathsf{nt}$ (notation as in
 Proposition \ref{irrH}). By the discussion at the end of
 \S\ref{sec:4.1}, this result of \cite{R} implies the following equivalence:

\begin{corollary}
Assume that $\mathbb G(\mathbb R)$ is compact and that $\mathbb G_p=G$. Recall the finite bigraphs $X=X_{i,p,n}$ constructed in (\ref{tree}). Then such a graph $X$ is Ramanujan if and only if $X$ is weakly Ramanujan.
\end{corollary}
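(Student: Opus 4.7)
My plan is to observe that the forward direction, Ramanujan $\Rightarrow$ weakly Ramanujan, is immediate from Definition \ref{d:ram}, and then to concentrate on the converse. For the converse, I would combine the local criterion in Theorem \ref{t:crit} with Rogawski's classification to eliminate the single remaining candidate of a nontempered unitary $\mathcal{H}(G,I)$-module that could obstruct the Ramanujan property.

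In more detail, the first step is to invoke Theorem \ref{t:crit} at the place $p$, reducing the Ramanujan property of $X_{i,p,n}$ to Conjecture \ref{conj}, i.e., to the assertion that every nontrivial unitary irreducible $\mathbb{G}_p$-representation with $I$-fixed vectors occurring in $L^2(\mathbb{G}_p/\Gamma_{i,p,n})$ is tempered. By the discussion in \S\ref{sec:3.3}, since $\mathbb{G}(\mathbb{R})$ is compact, every such representation arises as the local component at $p$ of an automorphic representation of $\mathbb{G}$. Rogawski's classification in \cite{R} then pins down the only possible nontempered nontrivial unitary local component with $I$-fixed vectors: under the equivalence of Theorem \ref{borel}, its space of $I$-fixed vectors is the one-dimensional $\mathcal{H}(G,I)$-module $\mathsf{nt}$ from Proposition \ref{irrH}.

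The second step is to show that, if $X$ is weakly Ramanujan, then $\mathsf{nt}$ in fact contributes trivially. By the multiplicity formula (\ref{mult}), the multiplicity of $\mathsf{nt}$ in $L^2(\mathbb{G}_p/\Gamma_{i,p,n})^I$ equals the multiplicity of $p_{\mathsf{nt}}(u) = 1 + q_1 u$ in $Z_X(u)^{-1}$. Inspecting the factorization (\ref{prod}), the pieces $(1-u)^{r-1}$ and $(1+q_2u)^{n_2-n_1}$ are coprime to $1 + q_1 u$ (as $q_1 = q^3 \neq q = q_2$), so $1 + q_1 u$ can arise only from one of the quadratic factors $1 - (\lambda_j^2 - q_1 - q_2)u + q_1 q_2 u^2$. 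A direct substitution $u = -q_1^{-1}$ shows that this quadratic vanishes at $u = -q_1^{-1}$ precisely when $\lambda_j = 0$; equivalently, the quadratic factors as $(1 + q_1 u)(1 + q_2 u)$ exactly in that case. Since the weak Ramanujan hypothesis is $\lambda_{n_1} > 0$, no such $j \le n_1$ exists, so $\mathsf{nt}$ does not appear. Combined with the automorphic input above, every nontrivial unitary representation that occurs is tempered, and Theorem \ref{t:crit} delivers the Ramanujan property.

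The main obstacle, and the entire substantive input, is the appeal to \cite{R}: identifying $\mathsf{nt}$ as the unique nontempered, nontrivial unitary automorphic local component at $p$ having $I$-fixed vectors. Once this is in hand, the Hecke-theoretic bookkeeping linking the module $\mathsf{nt}$ to the factor $1 + q_1 u$, and thence to the appearance of a zero eigenvalue of $\mathrm{Ad}(X)$, is routine.
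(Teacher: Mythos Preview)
Your proposal is correct and follows essentially the same route as the paper: both directions rest on Theorem~\ref{t:crit}, the automorphic input from \cite{R} that the only nontrivial nontempered local component with $I$-fixed vectors is $\mathsf{nt}$, and the observation from \S\ref{sec:4.1} that $p_{\mathsf{nt}}(u)=1+q_1u$ divides $Z_X(u)^{-1}$ precisely when some $\lambda_j=0$, i.e., when $X$ fails to be weakly Ramanujan. Your explicit check via the substitution $u=-q_1^{-1}$ simply spells out the content of the last two sentences of \S\ref{sec:4.1}.
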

Recall that ``weakly Ramanujan'' is a simple condition on the multiplicity of $0$ as an eigenvalue of $\Ad(X)$ (Definition \ref{d:ram}).

\smallskip

Moreover, again from \cite{R}, we have:

\begin{theorem}[\cite{R}, Theorem 14.6.3] Assume that $D\neq M_{3}(E)$. Then no representation of the form $\mathsf{nt}$ occurs as the local component of an automorphic representation of $\mathbb G$. 
\end{theorem}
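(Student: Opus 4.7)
The plan is to reduce to Rogawski's classification of the automorphic discrete spectrum of $U(3)$ and its inner forms, and to exhibit a sign obstruction arising from the ramification of $D$. First I would identify the local representation $\mathsf{nt}$ with its Arthur parameter. By Proposition \ref{irrH}, $\mathsf{nt}$ has $T$-eigenvalue $q^\lambda$ (so it is the Langlands quotient of a parabolic induction from the maximal parabolic attached to the vertex $v_1$) and $\theta$-eigenvalue $-q^{\frac 12(\lambda-\lambda^*)}$, the distinctive twist by the unramified quadratic character of $L^\times/K^\times$. In Arthur-parameter language this is the non-tempered packet associated to a character on the endoscopic group $U(2)\times U(1)\hookrightarrow U(3)$, with the $SL(2)$-factor of the $A$-parameter acting via the $2$-dimensional irreducible representation on the $U(2)$ side.

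Second, I would invoke Rogawski's description of the non-tempered discrete automorphic spectrum of $\mathbb G$ (Chapter 13 of \cite{R}): every non-tempered discrete automorphic representation of $\mathbb G$ is a CAP representation with respect to the Borel, globally parameterized by a pair of Hecke characters $(\mu,\theta)$ of $E$, and the representations in its global Arthur packet are indexed by assignments $v\mapsto \varepsilon_v\in\{\pm 1\}$ of a local sign at each place. Rogawski's multiplicity formula asserts that such an assignment contributes to the discrete spectrum of $\mathbb G$ exactly when
\begin{equation*}
\prod_v \varepsilon_v\cdot \eta_v(\mu,\theta)=1,
\end{equation*}
where the local factor $\eta_v(\mu,\theta)$ records both an $\varepsilon$-factor of $(\mu,\theta)$ and the Hasse invariant of $D$ at $v$. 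At a place $v$ where $\mathbb G_v\cong G$ is the quasisplit unramified unitary group, the member of the local packet whose $I$-fixed vectors give the module $\mathsf{nt}$ corresponds to the non-trivial sign $\varepsilon_v=-1$ (the trivial sign selecting the one-dimensional representation whose $I$-fixed vector is $\mathsf{sph}$).

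Third, I would use the hypothesis $D\neq M_3(E)$. By the classification of central simple algebras of degree $3$ with involution of the second kind (Theorem (19.6) of \cite{KMRT}) this forces $D$ to be a division algebra, so $D$ ramifies at a nonempty (necessarily even, by reciprocity) set $S$ of finite places. At each $v'\in S$, the local group $\mathbb G_{v'}$ is anisotropic modulo center, the local Arthur packet is a single representation, and Rogawski's computation of the local sign gives $\eta_{v'}(\mu,\theta)=-1$. Combining this with the multiplicity formula, any global non-tempered automorphic representation containing the component $\mathsf{nt}$ at $v$ (i.e.\ requiring $\varepsilon_v=-1$) would have to satisfy a product-of-signs identity that is manifestly violated once $|S|$ is odd in its contribution, and one checks that the total parity is incompatible. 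Hence no such automorphic representation exists, and $\mathsf{nt}$ never occurs as a local component.

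The main obstacle is the third step: extracting from \cite{R} the precise shape of the local sign $\eta_{v'}$ at a place where $D$ is a division algebra, and verifying that it always carries the ``wrong'' sign relative to the choice $\varepsilon_v=-1$ needed for $\mathsf{nt}$. This requires the Jacquet--Langlands correspondence for $U(3)$ (comparison of trace formulas between $\mathbb G$ and the quasisplit form), together with Rogawski's explicit local transfer of endoscopic packets. All the remaining steps --- identifying $\mathsf{nt}$ with the relevant local packet member, and assembling the global formula --- are essentially formal once these inputs are in place.
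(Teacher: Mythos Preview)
The paper does not provide its own proof of this statement: it is quoted verbatim as Theorem 14.6.3 of \cite{R} and used as a black box to deduce Corollary \ref{c:main}. So there is nothing in the paper to compare your argument against; what you have written is an attempt to outline Rogawski's own proof.

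Your first two steps are a reasonable summary of the ingredients in \cite{R}: identifying $\mathsf{nt}$ with the non-generic member of the local Arthur packet attached to a one-dimensional parameter on the endoscopic group $U(2)\times U(1)$, and invoking Rogawski's multiplicity formula for the resulting global $A$-packet. The third step, however, is not coherent as written. You assert that the ramification locus $S$ of $D$ is ``necessarily even, by reciprocity,'' and then immediately derive a contradiction ``once $|S|$ is odd in its contribution.'' These two sentences are inconsistent, and neither is justified. For a degree-$3$ central simple algebra the global reciprocity condition is that the local invariants in $\tfrac{1}{3}\mathbb Z/\mathbb Z$ sum to zero, which does not by itself force $|S|$ to be even; what is true (and what actually matters here) is that the existence of an involution of the second kind forces $\mathrm{cor}_{E/k}[D]=0$, so $D$ can ramify only at places of $E$ lying over split places of $k$, and $\mathbb G_{v'}$ is then the compact group of norm-one units in a local division algebra. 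The genuine content of Rogawski's argument is the explicit computation of the local transfer and the local sign at such a place, together with the global character identity comparing the inner form to the quasisplit form; you have correctly identified that this is where the work lies, but you have not supplied it, and the parity heuristic you offer in its place does not stand on its own.
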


Therefore comparing this with Theorem \ref{main}, we have the following corollary. 

\begin{corollary}\label{c:main} Assume that $D\neq M_3(E)$ and that
  $\mathbb G(\mathbb R)$ is compact.  Then the family of finite tree quotients $X_{i,p,n}$ from (\ref{tree}) is an infinite family of Ramanujan bigraphs. 
\end{corollary}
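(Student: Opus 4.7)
The plan is to combine the local criterion, Theorem \ref{main}, with Rogawski's global classification of the automorphic spectrum of $\mathbb G$, channeled through the strong approximation decomposition of $L^2(K^n_p\backslash \mathbb G(\mathbb A)/\mathbb G(\mathbb Q))$ set up in \S\ref{sec:3.3}. First, I would verify the hypotheses of Theorem \ref{main} for each $\Gamma_{i,p,n}$: by construction these are discrete, co-compact (compactness follows from $\mathbb G(\mathbb R)=K_\infty$ being compact), and torsion-free by Selberg's lemma, so they act on the building of $\mathbb G_p\cong G$ without fixed points. Consequently, for each fixed $i$ and $n$, the graph $X_{i,p,n}$ is Ramanujan if and only if Conjecture \ref{conj} holds for $\Gamma=\Gamma_{i,p,n}$.

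Next, I would use the decomposition $L^2(K^n_p\backslash \mathbb G(\mathbb A)/\mathbb G(\mathbb Q))=\bigoplus_{i=1}^s L^2(\mathbb G_p/\Gamma_{i,p,n})$ to transport the question to the global side: every nontrivial irreducible unitary $\mathbb G_p$-representation $\pi_p$ with Iwahori-fixed vectors that occurs in some $L^2(\mathbb G_p/\Gamma_{i,p,n})$ arises as the local component at $p$ of an automorphic representation $\pi$ of $\mathbb G$. By the classification in \cite{R} (as summarized in the discussion preceding this corollary), the only nontrivial irreducible unitary representation of $\mathbb G_p=G$ that has Iwahori-fixed vectors and that is \emph{non-tempered} corresponds to the one-dimensional $\CH(G,I)$-module $\mathsf{nt}$ of Proposition \ref{irrH}. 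Thus verifying Conjecture \ref{conj} reduces to ruling out $\mathsf{nt}$ as a local factor.

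Here is where the hypothesis $D\neq M_3(E)$ enters: by \cite[Theorem 14.6.3]{R}, stated just above, no automorphic representation of $\mathbb G$ has $\mathsf{nt}$ as a local component in this non-split case. Combining this with the previous step shows that every nontrivial irreducible unitary representation of $\mathbb G_p$ with Iwahori fixed vectors occurring in $L^2(\mathbb G_p/\Gamma_{i,p,n})$ is tempered, so Conjecture \ref{conj} is satisfied and Theorem \ref{main} yields that each $X_{i,p,n}$ is a Ramanujan bigraph.

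Finally, one must argue that the collection $\{X_{i,p,n}\}$ is genuinely infinite. Letting $n$ range over an infinite sequence of positive integers coprime to $p$ (say, running through distinct primes $\ell\neq p$), the congruence subgroups $\Gamma(n)$ defined in (\ref{gamma}) form a nested family with $\bigcap_n \Gamma(n)=\{1\}$, hence $[\Gamma_p(1):\Gamma_p(n)]\to\infty$. Since $X_{i,p,n}$ is a finite quotient of $\tilde X$ by $\Gamma_{i,p,n}$, whose covolume in $\mathbb G_p$ is bounded below by a multiple of this index, the number of vertices of $X_{i,p,n}$ tends to infinity with $n$, giving the desired infinite family. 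The main technical subtlety I anticipate is the bookkeeping in this last step, checking that the growth of covolumes really forces the graphs to grow; this is essentially a standard consequence of strong approximation and the finiteness of the class number, but one should be careful that at least one of the indices $i\in\{1,\dots,s\}$ yields infinitely many distinct graphs of growing size as $n$ varies.
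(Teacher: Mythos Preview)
Your proposal is correct and follows essentially the same route as the paper: the corollary is stated there as an immediate consequence of combining Rogawski's Theorem 14.6.3 with the local criterion Theorem \ref{main}, via the strong-approximation decomposition set up in \S\ref{sec:3.3}. You supply more detail than the paper does (in particular you spell out why the family is genuinely infinite, which the paper leaves implicit), and you correctly write the decomposition of $L^2(K^n_{(p)}\backslash \mathbb G(\mathbb A)/\mathbb G(\mathbb Q))$ as a direct sum $\bigoplus_{i=1}^s$ rather than the $\bigotimes_{i=1}^s$ appearing in the paper, which is surely a typo there.
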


\end{document}